\documentclass[11pt]{article}

\usepackage[T1]{fontenc}
\usepackage[utf8]{inputenc}
\usepackage[margin=27mm]{geometry}
\usepackage{amsmath,amssymb,amsthm}
\usepackage{longtable,booktabs,array,calc}
\usepackage{microtype}
\usepackage[hidelinks]{hyperref}
\IfFileExists{xurl.sty}{\usepackage{xurl}}{}
\providecommand{\tightlist}{%
  \setlength{\itemsep}{0pt}\setlength{\parskip}{0pt}}

\newtheorem{theorem}{Theorem}[section]
\newtheorem{proposition}[theorem]{Proposition}
\newtheorem{corollary}[theorem]{Corollary}

\newcommand{\citeproctext}{}
\newcommand{\citeproc}[2]{#2}
\newlength{\cslhangindent}
\newlength{\csllabelwidth}
\newenvironment{CSLReferences}[2]
 {\begin{list}{}{%
  \setlength{\itemindent}{0pt}
  \setlength{\leftmargin}{0pt}
  \setlength{\parsep}{0pt}
  \ifodd #1
   \setlength{\leftmargin}{\cslhangindent}
   \setlength{\itemindent}{-\cslhangindent}
  \fi
  \setlength{\itemsep}{#2\baselineskip}}}
 {\end{list}}

\makeatletter
\def\@biblabel#1{}
\makeatother

\hypersetup{
  pdftitle={Certified Countermodels in Profile and Incidence Fibres of Diamond-Induced Edge Partitions},
  pdfauthor={Ivan Khalamendyk}
}

\title{Certified Countermodels in Profile and Incidence Fibres of\\
Diamond-Induced Edge Partitions}
\author{Ivan Khalamendyk\\
\small Independent researcher, Ukraine}
\date{21 August 2026}

\begin{document}
\maketitle

\begin{abstract}
Let \(G=(V,E)\) be a finite loopless directed graph. Each directed
two-step diamond identifies its two pairs of opposite edges, and the
connected components of the resulting auxiliary graph on \(E\) define a
canonical partition \(\Pi_{\mathrm{opp}}(G)\). We ask whether a finite
relational certificate built from the blocks of an edge partition
identifies this canonical partition.

We prove the functoriality and a universal coarsening property of
\(\Pi_{\mathrm{opp}}(G)\), count its complete fixed-profile fibre and
its exact radius-one transposition neighbourhood, and reduce the parity
condition in the certificate to an odd closed walk in a directed
\(\mathbb Z_2\)-gain graph. In a fixed catalogue, four labelled
target-positive rows, representing three isomorphism types, each admits
a noncanonical positive partition obtained by a single cross-block
transposition. In one row a stronger positive comparison partition also
preserves every per-vertex, fixed-role incoming and outgoing count; it
arises from a 12-edge alternating trade. The corresponding incidence
fibre is a singleton in the other three rows.

A deterministic 64-index family of comparison partitions yields 78
bounded positives among 256 nonidentity partitions. On the same profile
fibre, one member has a complete 885-element relation semigroup and no
write-preserve-use certificate, giving an exact negative. Thus the
certificate has genuine but intermediate selectivity: it is neither
determined by block sizes nor specific to the canonical target. The
claims are finite statements in computational combinatorics and are
supported by explicit, independently checkable certificates.

\end{abstract}

\section{Introduction}\label{introduction}

A structured object is often tested by a certificate assembled from
several derived features. A positive certificate may show that the
features coexist, yet it need not identify the construction that
originally motivated them. The distinction is elementary but important:
existence and specificity are different mathematical questions.

This paper studies that distinction for partitions of the directed edges
of a finite graph. The target partition is not arbitrary. It is
generated by a local diamond rule: opposite sides of every directed
two-step diamond pattern are forced into the same block, and the rule is
closed transitively. Each block is then read as a binary relation on the
vertex set. Products of those relations can write, preserve and later
distinguish finite state pairs; the resulting record systems admit
translations, and translations carry a parity. The final positive
certificate contains a closed directed walk of odd total parity.

The WPU-parity certificate is used here as a concrete stress test for
identifiability. It combines finite relation-semigroup dynamics, a
stable behavioural quotient, and \(\mathbb Z_2\)-gain parity; the latter
two connect with standard refinement and gain-graph constructions
(\citeproc{ref-PaigeTarjan1987}{Paige and Tarjan 1987};
\citeproc{ref-Zaslavsky1982}{Zaslavsky 1982}). The catalogue and
certificate arose in an exploratory study of finite term-rewriting
systems with persistent distinctions and transport between record
systems. That computation produced four positive targets. Its
exploratory origin is not used as evidence here: the catalogue, target
partitions and certificate predicate were fixed before profile-matched
comparison partitions were generated. In this paper, a \emph{control} is
an alternative partition of the same edge set constructed subject to the
stated invariant, either the block-size profile or the fixed-role
incidence table; the term has no probabilistic meaning. The scientific
question addressed here is deliberately narrower:

\begin{quote}
Does the relational certificate identify the canonical diamond-induced
edge partition among partitions with the same edge carrier and the same
multiset of block sizes?
\end{quote}

The answer is no for every retained row, and the failure is local. A
single transposition of two edges between two blocks is enough. This is
the smallest nonidentity move in the fixed-profile partition graph.

\subsection{Contributions}\label{contributions}

The paper has six contributions.

\begin{enumerate}
\def\labelenumi{\arabic{enumi}.}
\tightlist
\item
  We give a self-contained graph-theoretic definition of the
  opposite-diamond partition and prove its functoriality and uniqueness
  among partitions with the same block count that preserve all
  generating diamond constraints.
\item
  We describe the complete profile fibre, prove its exact cardinality,
  and count the radius-one neighbourhood in the unlabelled transposition
  graph.
\item
  We define a stronger fixed-role incidence fibre and prove the
  alternating two-role trade lemma. One catalogue row has a 12-edge
  incidence-matched positive countermodel, while three rows are exactly
  incidence-rigid.
\item
  We formulate the relational property as a finite certificate on binary
  relations. Its final parity condition is a directed
  \(\mathbb Z_2\)-gain-cycle condition.
\item
  We provide four exact radius-one countermodels, one for each labelled
  positive row. They represent three target isomorphism types. The four
  witnesses are checked by the primary implementation and by an
  independent standard-library implementation.
\item
  We report a complete evaluation of a previously declared 64-index
  control family and an exact negative with a complete 885-element
  relation semigroup. This separates non-specificity from trivial
  profile dependence.
\end{enumerate}

\subsection{Scope}\label{scope}

All theorem statements refer either to arbitrary finite directed graphs
or to the explicit finite catalogue supplied with this paper. A positive
result is based on a finite witness and remains valid even when the
relation semigroup search did not reach a fixed point. A missing witness
is called negative only when the relevant search is proved complete.
Fractions in the deterministic 64-index family are not sampling
probabilities for the full profile fibre.

The incidence fibre uses the target role labels as fixed labels;
quotienting by equal-size role renamings is discussed separately.

\section{Diamond-induced partitions and profile fibres}\label{diamond-induced-partitions-and-profile-fibres}

\subsection{Directed diamond patterns}\label{directed-diamond-patterns}

Throughout, \(G=(V,E)\) is a finite loopless directed graph without
parallel edges. A \textbf{directed two-step diamond pattern} is a
quadruple of distinct vertices \((x,a,b,y)\) such that

\[
 (x,a),(x,b),(a,y),(b,y)\in E.
\]

The diamond contributes two unordered pairs of opposite edges,

\[
 \{(x,a),(b,y)\},\qquad \{(x,b),(a,y)\}.
\]

Define the undirected auxiliary graph \(\Omega(G)\) by taking \(E\) as
its vertex set and joining two directed edges whenever they form one of
these opposite pairs. The \textbf{opposite-diamond partition} is

\[
 \Pi_{\mathrm{opp}}(G)=\pi_0(\Omega(G)),
\]

the set of connected components of \(\Omega(G)\).

This is not an induced-pattern condition: additional arcs among the four
vertices are allowed. Equivalence relations generated by opposite sides
of squares are standard in event-structure, median-graph and
graph-product settings (\citeproc{ref-Chepoi2012}{Chepoi 2012};
\citeproc{ref-HellmuthEtAl2015}{Hellmuth et al. 2015}). Here we use only
the directed two-step pattern above.

\subsection{Edge partitions and their profiles}\label{edge-partitions-and-their-profiles}

An edge partition is an unlabelled collection

\[
 \Pi=\{C_1,\ldots,C_q\}
\]

of nonempty, pairwise disjoint subsets whose union is \(E\). Its profile
is the multiset

\[
 \lambda(\Pi)=\{\!\{|C_1|,\ldots,|C_q|\}\!\}.
\]

For a fixed profile \(\lambda=(s_1,\ldots,s_q)\), the profile fibre is

\[
 \mathcal P_\lambda(E)
 =\{\Pi:\lambda(\Pi)=\lambda\}.
\]

If \(m_r\) is the multiplicity of the part size \(r\), then

\[
 |\mathcal P_\lambda(E)|
 =\frac{|E|!}{\prod_{i=1}^{q}s_i!\prod_{r\ge 1}m_r!}.
\tag{2.1}
\]

The factors \(s_i!\) remove order inside each block and \(m_r!\) removes
the order of equal-sized blocks.

\subsection{Co-membership distance}\label{co-membership-distance}

For an edge partition \(\Pi\), let

\[
 M(\Pi)=\bigl\{\{e,f\}:e\ne f\text{ and }e,f\text{ belong to one block of }\Pi\bigr\}.
\]

We use the exact co-membership distance

\[
 d_{\mathrm{pair}}(\Pi,\Pi')
 =|M(\Pi)\triangle M(\Pi')|.
\tag{2.2}
\]

If \(\lambda(\Pi)=\lambda(\Pi')\), then

\[
 |M(\Pi)\setminus M(\Pi')|
 =|M(\Pi')\setminus M(\Pi)|,
\tag{2.3}
\]

because \(|M(\Pi)|=\sum_i\binom{s_i}{2}\) depends only on the profile.

\subsection{The profile-preserving transposition graph}\label{the-profile-preserving-transposition-graph}

Let \(\Pi\in\mathcal P_\lambda(E)\). Choose distinct blocks
\(A,B\in\Pi\), an edge \(e\in A\), and an edge \(f\in B\). Replacing

\[
 A,B\quad\text{by}\quad A-e+f,\ B-f+e
\tag{2.4}
\]

preserves the profile. After discarding moves that reproduce the same
unlabelled partition, these moves define the radius-one neighbours of
\(\Pi\) in the transposition graph \(\mathcal S_\lambda(E)\). Write
\(\Pi\sim_{\mathcal S_\lambda(E)}\Pi'\) when the corresponding vertices
are adjacent in this graph.

Let \(m_1\) and \(m_2\) be the numbers of singleton and two-element
blocks. Every vertex of \(\mathcal S_\lambda(E)\) has degree

\[
 \deg\mathcal S_\lambda(E)
 =\sum_{i<j}s_is_j-\binom{m_1}{2}-2\binom{m_2}{2}.
\tag{2.5}
\]

Indeed, \(\sum_{i<j}s_is_j\) counts raw cross-block edge pairs. Swapping
two singletons returns the same unlabelled partition. For two
two-element blocks, the four raw swaps occur in complementary pairs and
give only two neighbours. For every remaining nontrivial move, the two
blocks of \(\Pi\) absent from \(\Pi'\) identify \(\{A,B\}\). If their
sizes differ, the replacement blocks are distinguished by size; if their
common size is at least three, each replacement block contains at least
two elements of a unique original block. In either case the set
differences recover \(e\) and \(f\), so neither a different original
block pair nor a different swap gives the same \(\Pi'\). This proves
(2.5).

If the swapped blocks have sizes \(a\) and \(b\), then

\[
 d_{\mathrm{pair}}(\Pi,\Pi')=2(a+b-2).
\tag{2.6}
\]

\subsection{Fixed-role incidence fibres}\label{fixed-role-incidence-fibres}

For this subsection, fix labels \(1,\ldots,q\) on the blocks and
identify a partition with a role assignment \(c:E\to[q]\). For a vertex
\(v\) and role \(r\), write

\[
 d^+_r(v;c)=|\{(v,w)\in E:c(v,w)=r\}|,\qquad
d^-_r(v;c)=|\{(u,v)\in E:c(u,v)=r\}|.
\tag{2.7}
\]

For an assignment whose role fibres are nonempty, write

\[
 \Pi(c)=\{c^{-1}(r):r\in[q]\}
\]

for its underlying unlabelled edge partition.

The \textbf{fixed-role incidence fibre} of \(c\) is

\[
 \mathcal I(c)=
 \{c':d^\pm_r(v;c')=d^\pm_r(v;c)
 \text{ for every }v\in V,\ r\in[q]\}.
\tag{2.8}
\]

Thus \(\mathcal I(c)\) preserves the complete per-vertex, per-role
incoming and outgoing margin table, not merely the role-size multiset.

To describe elementary moves in this fibre, replace every vertex \(v\)
by a tail copy \(v^+\) and a head copy \(v^-\). Every directed edge
\((u,v)\) becomes the bipartite edge \(u^+v^-\). Fix two roles \(r,s\).

\begin{proposition}[Alternating two-role trades]\label{proposition-2.1-alternating-two-role-trades}

Reversing the roles \(r\) and \(s\) along an even cycle whose edge roles
alternate \(r,s,r,s,\ldots\) produces another assignment in
\(\mathcal I(c)\). Conversely, if \(c'\in\mathcal I(c)\) and every
changed edge changes from \(r\) to \(s\) or from \(s\) to \(r\), then
the changed bipartite edges decompose into edge-disjoint alternating
closed trails, and hence into alternating cycles.

\end{proposition}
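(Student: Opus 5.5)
The plan is to work entirely in the bipartite double cover \(B(G)\) with vertex classes \(\{v^+\}\) and \(\{v^-\}\). The key observation is that the margin table (2.7) is exactly the per-vertex, per-role degree table of \(B(G)\). Concretely, \(d^+_r(v;c)\) is the number of role-\(r\) edges at \(v^+\), and \(d^-_r(v;c)\) is the number of role-\(r\) edges at \(v^-\). Since \(G\) has no parallel edges, the map \((u,v)\mapsto u^+v^-\) is a bijection from \(E\) onto the edge set of \(B(G)\). So \(c'\in\mathcal I(c)\) holds if and only if, at every vertex of \(B(G)\), each role appears equally often under \(c\) and \(c'\).

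For the forward direction, let \(Z\) be an even cycle in \(B(G)\) whose roles alternate \(r,s,r,s,\ldots\). Define \(c'\) by swapping \(r\leftrightarrow s\) on the edges of \(Z\) and leaving every other edge unchanged. The cycle passes through each vertex \(w\) some number of times. At each passage it uses two consecutive edges of \(Z\), one of role \(r\) and one of role \(s\); this includes the wrap-around pair, which is consistent because \(Z\) is even. Each passage therefore loses one \(r\) and gains one \(s\), and also loses one \(s\) and gains one \(r\), so the counts at \(w\) do not change. Roles other than \(r,s\) are untouched. Hence \(c'\in\mathcal I(c)\). The same argument applies verbatim to alternating closed trails, which I will need below.

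For the converse, let \(D\) be the set of bipartite edges changed between \(c\) and \(c'\). Split it as \(D=D_{r\to s}\sqcup D_{s\to r}\). Fix a vertex \(w\). Let \(\alpha\) be the number of \(D_{r\to s}\) edges at \(w\) and \(\beta\) the number of \(D_{s\to r}\) edges at \(w\). The role-\(r\) count at \(w\) changes by \(\beta-\alpha\), so preservation forces \(\alpha=\beta\). Thus in the subgraph \(D\), every vertex has equally many edges of the two types.

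Next I will pair, at each vertex, the incident \(r\to s\) edges with the incident \(s\to r\) edges by an arbitrary bijection. This produces a transition system, and following the transitions decomposes \(D\) into edge-disjoint closed trails. Along each trail the edge types alternate, and each closing transition also pairs opposite types, so every trail is alternating and has even length. This is the standard Euler-type argument applied to a 2-edge-coloured graph with balanced colour degrees.

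Finally, I will pass from closed trails to cycles. Take an alternating closed trail that revisits a vertex \(w\). I would split it at \(w\) into two closed subtrails. The difficulty is that a subtrail may not alternate at \(w\), since it may enter and leave \(w\) by edges of the same type.

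This is where I expect the main obstacle to lie. Here bipartiteness helps. A closed subtrail has even length because \(B(G)\) is bipartite. Along that subtrail the types alternate at every internal vertex, so the parity of its length determines whether its two ends at \(w\) have opposite types. An even length forces the ends to carry different types, so the subtrail alternates at \(w\) as well.

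Iterating this splitting until no vertex repeats yields edge-disjoint alternating cycles. Each of these is even because \(B(G)\) is bipartite.
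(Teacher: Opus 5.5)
Your proposal is correct and follows the same route as the paper. Both arguments read the margins as role-degrees in the tail--head bipartite graph, note that a swap along an alternating cycle preserves them, and for the converse use the balance of $r\to s$ and $s\to r$ changes at each copy to pair opposite change types locally and decompose the change graph into alternating closed trails. The paper compresses the final step into ``repeated vertices can be split to obtain cycles''; your parity argument supplies the missing justification. Every closed subtrail has even length because the tail--head graph is bipartite, so its two end edges at the split vertex have opposite types.
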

\begin{proof} At each tail or head copy on an alternating cycle, one
incident \(r\)-edge becomes an \(s\)-edge and one incident \(s\)-edge
becomes an \(r\)-edge, so every margin is unchanged. Conversely, at
every bipartite vertex the number of changed \(r\to s\) edges equals the
number of changed \(s\to r\) edges. Pairing opposite change types
locally decomposes the finite balanced change graph into alternating
closed trails; repeated vertices can be split to obtain cycles.
\end{proof}

This is the familiar switch principle behind fixed-margin fibres and
contingency-table moves (\citeproc{ref-DiaconisSturmfels1998}{Diaconis
and Sturmfels 1998}). Alternating colour flips also underlie Markov
chains on proper bipartite edge-colouring solution spaces
(\citeproc{ref-HongMiklos2023}{Hong and Miklós 2023}). Our role
assignments need not be proper colourings; we use the alternating-cycle
move only for exact fixed-margin comparison, not to assert
irreducibility of a sampling chain.

\section{Canonicality of the target partition}\label{canonicality-of-the-target-partition}

\begin{theorem}[Functoriality]\label{theorem-3.1-functoriality}

If \(\varphi:G\to G'\) is an isomorphism of directed graphs, then the
induced edge bijection sends \(\Pi_{\mathrm{opp}}(G)\) to
\(\Pi_{\mathrm{opp}}(G')\). In particular, every automorphism of \(G\)
fixes \(\Pi_{\mathrm{opp}}(G)\) as an unlabelled edge partition.

\end{theorem}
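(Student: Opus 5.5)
The proof is a transport-of-structure argument carried out at three levels: diamond patterns, the auxiliary graph \(\Omega\), and its components. Let \(\varphi:V\to V'\) be a bijection with \((u,v)\in E\) iff \((\varphi(u),\varphi(v))\in E'\). Write \(\varphi_E:E\to E'\), \((u,v)\mapsto(\varphi(u),\varphi(v))\), for the induced edge map. It is a bijection whose inverse is \((\varphi^{-1})_E\), because \(\varphi^{-1}\) is also an isomorphism.

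\emph{Step 1: diamonds.} I will show that \((x,a,b,y)\) is a directed two-step diamond pattern in \(G\) iff \((\varphi(x),\varphi(a),\varphi(b),\varphi(y))\) is one in \(G'\). The four vertices are distinct iff their images are, since \(\varphi\) is injective. The four required arcs lie in \(E\) iff their images lie in \(E'\), by the isomorphism property. The reverse implication follows by applying the same reasoning to \(\varphi^{-1}\).

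\emph{Step 2: \(\Omega\).} The opposite pairs of a diamond are \(\{(x,a),(b,y)\}\) and \(\{(x,b),(a,y)\}\). Under \(\varphi_E\) they go exactly to the opposite pairs of the image diamond. Combined with Step 1, this shows that \(\{e,f\}\) is an edge of \(\Omega(G)\) iff \(\{\varphi_E(e),\varphi_E(f)\}\) is an edge of \(\Omega(G')\). So \(\varphi_E\) is a graph isomorphism \(\Omega(G)\to\Omega(G')\).

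\emph{Step 3: components.} A graph isomorphism maps paths to paths in both directions, so it maps each connected component bijectively onto a connected component. Hence \(\varphi_E(C)\in\Pi_{\mathrm{opp}}(G')\) for every \(C\in\Pi_{\mathrm{opp}}(G)\), and every block of \(\Pi_{\mathrm{opp}}(G')\) arises this way. In other words, \(\varphi_E\) sends \(\Pi_{\mathrm{opp}}(G)\) onto \(\Pi_{\mathrm{opp}}(G')\).

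The automorphism statement is the special case \(G'=G\): then \(\varphi_E\) permutes the blocks of \(\Pi_{\mathrm{opp}}(G)\), so the unlabelled partition is fixed even though individual blocks may be moved. No step is a real obstacle. The only point needing care is to prove the equivalence in Step 1 in both directions, by using \(\varphi^{-1}\), so that \(\Omega(G')\) gains no extra adjacencies and the image components are exactly the components rather than merely contained in them.
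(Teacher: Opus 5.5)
Your proposal is correct and follows the paper's own proof: diamond patterns and their opposite pairs are transported by \(\varphi\), which gives an isomorphism \(\Omega(G)\cong\Omega(G')\), and a graph isomorphism maps connected components bijectively. The one difference is that you explicitly use \(\varphi^{-1}\) for the converse direction, so that \(\Omega(G')\) gains no extra adjacencies; the paper leaves this step implicit.
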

\begin{proof} A directed-graph isomorphism sends directed two-step
diamond patterns to such patterns and opposite edge pairs to opposite
edge pairs. It therefore induces an isomorphism
\(\Omega(G)\cong\Omega(G')\), which maps connected components
bijectively. \end{proof}

\begin{theorem}[Universal coarsening and fixed-block-count uniqueness]\label{theorem-3.2-universal-coarsening-and-fixed-block-count-uniqueness}

Let \(\Pi^*=\Pi_{\mathrm{opp}}(G)\). Suppose that an edge partition
\(\Pi\) places every opposite pair from every directed two-step diamond
pattern in a common block. Then \(\Pi\) is a coarsening of \(\Pi^*\). If
\(\Pi\) and \(\Pi^*\) have the same number of blocks, then
\(\Pi=\Pi^*\).

\end{theorem}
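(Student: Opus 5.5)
The plan is to treat the condition on \(\Pi\) as a statement about the edges of \(\Omega(G)\), and then to use connectivity. Let \(\sim_\Pi\) be the equivalence relation ``lies in the same block of \(\Pi\)'' on \(E\). The hypothesis says precisely that for every edge \(\{e,f\}\) of the auxiliary graph \(\Omega(G)\) we have \(e\sim_\Pi f\). This holds because the edges of \(\Omega(G)\) are by definition exactly the opposite pairs contributed by directed two-step diamond patterns.

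\textbf{Coarsening.} Take any block \(C\in\Pi^*\). It is a connected component of \(\Omega(G)\), so any two edges \(e,f\in C\) are joined by a path \(e=e_0,e_1,\ldots,e_k=f\) in \(\Omega(G)\). Each step \(e_{i-1}e_i\) is an opposite pair, so \(e_{i-1}\sim_\Pi e_i\). Transitivity of \(\sim_\Pi\), applied by induction on \(k\), gives \(e\sim_\Pi f\). Hence \(C\) is contained in a single block of \(\Pi\). Since both \(\Pi\) and \(\Pi^*\) partition the same set \(E\), every block of \(\Pi\) is then a disjoint union of blocks of \(\Pi^*\). This is the definition of \(\Pi\) being a coarsening of \(\Pi^*\).

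\textbf{Uniqueness.} Consider the map \(\rho:\Pi^*\to\Pi\) sending each block of \(\Pi^*\) to the unique block of \(\Pi\) that contains it.
\begin{itemize}
\item \(\rho\) is surjective: every block \(B\in\Pi\) is nonempty, and any \(e\in B\) lies in some block \(C\) of \(\Pi^*\) with \(\rho(C)=B\).
\item If \(|\Pi|=|\Pi^*|\), then \(\rho\) is a surjection between finite sets of equal size, hence a bijection.
\item Each \(B\in\Pi\) is therefore the union of exactly one block of \(\Pi^*\), namely \(B=\rho^{-1}(B)\). So the two partitions coincide.
\end{itemize}

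No step is a real obstacle; the whole proof is bookkeeping. The one point needing care is that the isolated vertices of \(\Omega(G)\), meaning edges of \(G\) lying in no diamond, are still components and hence singleton blocks of \(\Pi^*\). The argument covers them automatically, since a singleton is trivially contained in one block of \(\Pi\). The block-count hypothesis is essential: without it, \(\Pi=\{E\}\) already satisfies every diamond constraint.
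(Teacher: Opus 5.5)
Your proof is correct and follows the paper's argument: paths in $\Omega(G)$ stay inside one block of $\Pi$, so $\Pi$ coarsens $\Pi^*$, and equal block count rules out a strict coarsening. The only difference is that you make the paper's one-line remark ``a strict coarsening has fewer blocks'' explicit, via the surjection $\rho:\Pi^*\to\Pi$.
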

\begin{proof} Every edge of \(\Omega(G)\) joins two elements of one
block of \(\Pi\). Hence every path in \(\Omega(G)\) lies in one block of
\(\Pi\), so each connected component of \(\Omega(G)\) is contained in a
block of \(\Pi\). Thus \(\Pi\) coarsens \(\Pi^*\). A strict coarsening
has fewer blocks, proving the second assertion. \end{proof}

\begin{corollary}\label{corollary-3.3}

Every nonidentity partition in the profile fibre of \(\Pi^*\) breaks at
least one generating opposite-diamond identification. It also lies
outside the \(\operatorname{Aut}(G)\)-orbit of \(\Pi^*\).
\end{corollary}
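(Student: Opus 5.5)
Both assertions will come directly from Theorem~\ref{theorem-3.2-universal-coarsening-and-fixed-block-count-uniqueness} and Theorem~\ref{theorem-3.1-functoriality}, using contrapositives. Let \(\Pi\in\mathcal P_\lambda(E)\) with \(\lambda=\lambda(\Pi^*)\) and \(\Pi\ne\Pi^*\).

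For the first assertion, note that a common profile forces a common number of blocks. The profile is a multiset of \(q\) block sizes, so \(\Pi\) and \(\Pi^*\) both have exactly \(q\) blocks. Suppose, for contradiction, that \(\Pi\) placed every opposite pair of every directed two-step diamond pattern in a common block. The uniqueness clause of Theorem~\ref{theorem-3.2-universal-coarsening-and-fixed-block-count-uniqueness} would then give \(\Pi=\Pi^*\), which is false. Hence some generating opposite pair \(\{(x,a),(b,y)\}\) or \(\{(x,b),(a,y)\}\) is split between two distinct blocks of \(\Pi\). This is what "breaks at least one generating identification" means: the pair is an edge of \(\Omega(G)\), not merely two edges in the same component.

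For the second assertion, let \(\varphi\in\operatorname{Aut}(G)\). By Theorem~\ref{theorem-3.1-functoriality}, the induced edge bijection \(\varphi_E\) sends \(\Pi_{\mathrm{opp}}(G)\) to \(\Pi_{\mathrm{opp}}(G)\), so \(\varphi_E(\Pi^*)=\Pi^*\). The \(\operatorname{Aut}(G)\)-orbit of \(\Pi^*\) is therefore the singleton \(\{\Pi^*\}\), and the nonidentity partition \(\Pi\) lies outside it. One could also argue via the first part: \(\varphi_E(\Pi^*)\) respects all opposite pairs, because \(\varphi\) permutes diamond patterns. The singleton-orbit argument is shorter, so I will use it.

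I expect no real obstacle. The only point needing care is the step from equal profile to equal block count, which is immediate from the definition of \(\lambda\) as a multiset with one entry per block. I will also state explicitly that "nonidentity" means \(\Pi\ne\Pi^*\) as unlabelled partitions.
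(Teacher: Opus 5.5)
Your proposal is correct and follows the same route as the paper. The first assertion is the contrapositive of the uniqueness clause of Theorem 3.2, using that equal profiles force equal block counts. The second follows from Theorem 3.1, which makes the $\operatorname{Aut}(G)$-orbit of $\Pi^*$ the singleton $\{\Pi^*\}$, exactly as the paper's remark after the corollary states.
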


This corollary is stronger than a literal bytewise comparison of
partitions: the target orbit is a singleton because the target
construction itself is canonical.

\section{From edge blocks to relational certificates}\label{from-edge-blocks-to-relational-certificates}

\subsection{Generator relations}\label{generator-relations}

For every block \(C\in\Pi\), define a binary relation
\(T_C\subseteq V\times V\) by

\[
 (u,v)\in T_C\quad\Longleftrightarrow\quad (u,v)\in C.
\tag{4.1}
\]

We compose relations from left to right. For a nonempty word
\(w=C_1\cdots C_k\), where \(k\ge 1\), let
\(T_w=T_{C_1}\circ\cdots\circ T_{C_k}\). The finite semigroup \(S(\Pi)\)
consists of the distinct relations represented by such nonempty words.
In W2--W3 alone, we also admit the empty word \(\varepsilon\), with
\(T_\varepsilon=\operatorname{Id}_V\); it is not an additional element
of \(S(\Pi)\) unless the identity relation is represented by a nonempty
word.

\subsection{Behavioural quotient}\label{behavioural-quotient}

Start with the indiscrete partition of \(V\). At each refinement round,
two vertices remain equivalent when, for every generator \(T_C\), their
sets of reachable blocks in the current partition agree. The stable
partition is the behavioural quotient \(Q(\Pi)\). This is a standard
finite partition-refinement construction
(\citeproc{ref-PaigeTarjan1987}{Paige and Tarjan 1987};
\citeproc{ref-DorschEtAl2017}{Dorsch et al. 2017}); Appendix A gives the
precise recursion used here.

\subsection{Write-preserve-use certificates}\label{write-preserve-use-certificates}

A write-preserve-use certificate is a finite tuple of vertices,
generator names, relation words and quotient classes satisfying the
following directly checkable conditions.

\begin{enumerate}
\def\labelenumi{\arabic{enumi}.}
\tightlist
\item
  \textbf{Write.} Two transitions from one source reach vertices in
  different behavioural classes.
\item
  \textbf{Offload.} A relation word, possibly the empty word
  \(\varepsilon\), maps those two outcomes to a distinct pair \((p,q)\).
\item
  \textbf{Preserve.} Another nonidentity relation word acts as a partial
  function on the pair and gives it a nontrivial eventually periodic
  orbit that never merges the two behavioural classes.
\item
  \textbf{Writer independence after offload.} The original writer
  channels have the same enabledness profile on \(p\) and \(q\).
\item
  \textbf{Use.} A further nonidentity relation word gives \(p\) and
  \(q\) different quotient response profiles, not merely their original
  singleton profiles.
\end{enumerate}

Appendix A states the complete finite predicate. The definition does not
depend on how a witness was discovered.

Certificates with the same unordered pair of quotient classes can be
grouped into a \textbf{record system}. Its support is the union of the
pair orbits occurring in the chosen certificates.

\subsection{Translations and parity}\label{translations-and-parity}

Let \(X\) and \(Y\) be record systems, each carrying an ordered
presentation of two quotient classes. A relation word induces a
translation \(X\to Y\) when it maps all representatives of the two
source classes into the two target classes and induces a bijection
between them. Its gain is

\[
 \gamma(X\to Y)=
 \begin{cases}
 0,&\text{if the chosen order is preserved},\\
 1,&\text{if it is reversed}.
 \end{cases}
\tag{4.2}
\]

If a bit \(\eta(X)\in\mathbb Z_2\) is assigned to each record system
\(X\), then an arc \(e:X\to Y\) has transformed gain

\[
 \gamma'(e)=\gamma(e)+\eta(X)+\eta(Y).
\tag{4.3}
\]

Thus a non-loop arc flips exactly when one endpoint order is reversed,
while a loop is unchanged. The gain sum around every closed directed
walk is gauge invariant.

\subsection{The WPU-parity certificate}\label{the-wpu-parity-certificate}

We say that \((G,\Pi)\) has \textbf{a WPU-parity certificate}, written
\(\mathsf H(G,\Pi)=1\), when a finite witness supplies:

\begin{enumerate}
\def\labelenumi{\arabic{enumi}.}
\tightlist
\item
  at least two record systems built from valid write-preserve-use
  certificates;
\item
  a pair of mutually translatable record systems;
\item
  two systems whose supports are strictly nested or have nonempty proper
  overlap;
\item
  a finite reading word that distinguishes at least two phases of a
  nontrivial preservation orbit inside another record system; and
\item
  a closed directed translation walk of odd total gain.
\end{enumerate}

This is an existential certificate property. A bounded breadth-first
search is used only to discover words. A reported positive is exact once
every word and finite condition has been checked. Failure to find a
certificate is not a negative result unless the relevant relation
semigroup has been exhausted.

\subsection{Bounded discovery and certificate semantics}\label{bounded-discovery-and-certificate-semantics}

For comparison with the exploratory computation, let \(D_{B,L}^{\max}\)
denote the bounded evaluator with at most \(B\) discovered relation
products and word length at most \(L\). It constructs record systems
only from that bounded prefix. The mathematical property studied below
is the existential property \(\mathsf H\) defined above.

\begin{proposition}[One-way soundness bridge]\label{proposition-4.1-one-way-soundness-bridge}

If \(D_{B,L}^{\max}(G,\Pi)=1\), then the finite systems, words, pair
orbits and translation cycle used by that output form an
\(\mathsf H(G,\Pi)=1\) certificate once their direct predicates are
verified.

\end{proposition}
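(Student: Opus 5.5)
The plan is to show that nothing in the definition of \(\mathsf H\) refers to how a witness was found. Every ingredient the bounded evaluator outputs is a finite object whose defining condition can be checked on its own. The argument is therefore a matching step. For each clause of the WPU-parity certificate, name the component of the \(D_{B,L}^{\max}\) output that instantiates it, and observe that the clause's predicate is exactly the direct check the hypothesis lets us assume has been carried out.

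First fix the output data. It consists of the relation words found in the bounded prefix, each a nonempty word or \(\varepsilon\) in W2--W3, with the relation \(T_w\) given by (4.1). It also contains the behavioural quotient \(Q(\Pi)\), the write-preserve-use tuples, the record systems grouped by unordered class pair, the translation arcs with their gains (4.2), and a closed directed walk. Two observations make these meaningful independently of the bound. The relation \(T_w\) depends only on \(w\) and \(\Pi\), not on its enumeration index among the first \(B\) products. And \(Q(\Pi)\) is the stable refinement of Appendix A, computed from the generators alone. So restricting attention to a bounded prefix changes which objects are considered, not what they are.

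Next go through the clauses in order.

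\begin{enumerate}
\item Each tuple the evaluator used passes the Appendix A predicate for write, offload, preserve, writer independence and use. It is therefore a valid certificate, and at least two record systems exist.
\item Each translation arc used satisfies the class-mapping and bijection conditions, so the reported pair of mutually translatable systems is a genuine pair.
\item The nesting or proper-overlap condition concerns supports. A support is a union of pair orbits, which the preserve words determine. So the reported comparison holds outright.
\item The reading word and the phases it distinguishes are explicit, so the reading condition can be checked directly.
\item The cycle's gain is the sum of the \(\gamma\) values along its arcs. By the gauge remark after (4.3), this sum does not depend on the chosen class orders. Its oddness is therefore an intrinsic property, not an artefact of the presentation the evaluator fixed.
\end{enumerate}

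The main point needing care is record systems. The evaluator builds each system only from certificates in the bounded prefix, so a system in the output may be a proper subsystem of what one would get from the full semigroup. To handle this, I would take the record systems in the \(\mathsf H\)-witness to be exactly the finite collections the evaluator used. The definition groups certificates with the same unordered class pair but does not require maximality. Supports and translations are then computed from those chosen certificates, so clauses 2, 3 and 5 remain true as stated.

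The direction is one-way only. Nothing is claimed when the evaluator returns \(0\), which matches the convention in the scope section that negatives require an exhausted semigroup.
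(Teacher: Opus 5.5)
Your proposal is correct and follows the same route as the paper: every word in the output denotes a relation determined by the generators of $\Pi$ alone, and the named write-preserve-use witnesses, record systems, translations, localization and recorded-change witnesses and odd cycle are exactly the admissible finite objects of the existential definition, so direct recomposition certifies $\mathsf H(G,\Pi)=1$ without assuming the bounded relation set is complete. You also note explicitly that W6 does not require record systems to be maximal, so the evaluator's bounded-prefix systems, with their supports and representatives, can be used verbatim; the paper's terse proof leaves this point implicit.
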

\begin{proof} Every supplied word is a product of the generator
relations. The bounded evaluator's positive output names finite WPU
witnesses, record systems, reciprocal translations, localization and
recorded-change witnesses, and an odd translation cycle. These are
precisely admissible finite objects for the existential definition;
direct recomposition checks every predicate without assuming that the
bounded relation set is complete. \end{proof}

No converse is claimed, and \(\mathsf H\) is not defined as the limit of
a particular search procedure. Bounded search is used for discovery;
every positive theorem below is supported by an explicit finite
certificate.

\section{The gain-graph reduction}\label{the-gain-graph-reduction}

For a chosen finite family of record systems, form a directed multigraph
\(\Gamma_{\mathrm{tr}}\). Its vertices are the record systems and each
verified translation is an arc labelled by its gain in \(\mathbb Z_2\).

\begin{theorem}[Parity-lift criterion]\label{theorem-5.1-parity-lift-criterion}

Define the parity lift by

\[
 \widetilde{\Gamma}_{\mathrm{tr}}
 =V(\Gamma_{\mathrm{tr}})\times\mathbb Z_2.
\]

For a record system \(X\), the following are equivalent:

\begin{enumerate}
\def\labelenumi{\arabic{enumi}.}
\tightlist
\item
  \(\Gamma_{\mathrm{tr}}\) has a closed directed walk based at \(X\)
  with odd total gain;
\item
  there is a directed path from \((X,0)\) to \((X,1)\) in the parity
  lift; and
\item
  \((X,0)\) and \((X,1)\) belong to one strongly connected component of
  the parity lift.
\end{enumerate}

Consequently, after the translations have been supplied, the parity gate
is decidable in
\(O(|V(\Gamma_{\mathrm{tr}})|+|E(\Gamma_{\mathrm{tr}})|)\) time.

\end{theorem}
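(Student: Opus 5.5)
The plan is to make the edge set of the parity lift explicit and then prove the cycle of implications (1)\(\Rightarrow\)(2)\(\Rightarrow\)(3)\(\Rightarrow\)(1). Take each arc \(e:Y\to Z\) of \(\Gamma_{\mathrm{tr}}\) with gain \(\gamma(e)\), and give \(\widetilde{\Gamma}_{\mathrm{tr}}\) the two arcs \((Y,b)\to(Z,b+\gamma(e))\), one for each \(b\in\mathbb Z_2\). Loops and parallel arcs are handled the same way. With this construction, \(\widetilde{\Gamma}_{\mathrm{tr}}\) has exactly \(2|V|\) vertices and \(2|E|\) arcs. The basic lemma is a lifting correspondence. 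Directed walks \(W=e_1\cdots e_k\) from \(Y\) to \(Z\) in \(\Gamma_{\mathrm{tr}}\) are in bijection with directed walks in the lift that start at \((Y,b)\). The lifted walk ends at \((Z,\,b+\sum_i\gamma(e_i))\). I will prove this by induction on \(k\), since the second coordinate is forced at each step.

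(1)\(\Rightarrow\)(2). Lift the odd closed walk based at \(X\), starting from \((X,0)\). By the correspondence it ends at \((X,1)\), and a walk can be shortened to a path by deleting cycles. (2)\(\Rightarrow\)(3). Use the involution \(\sigma(Y,b)=(Y,b+1)\). It is an automorphism of the lift, because it maps each arc \((Y,b)\to(Z,b+\gamma)\) to the arc \((Y,b+1)\to(Z,b+1+\gamma)\). Applying \(\sigma\) to the path from \((X,0)\) to \((X,1)\) gives a path from \((X,1)\) to \((X,0)\). Mutual reachability is exactly membership in one strongly connected component. (3)\(\Rightarrow\)(1). Strong connectivity gives a path from \((X,0)\) to \((X,1)\). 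Project it to \(\Gamma_{\mathrm{tr}}\). The projection is a closed walk based at \(X\), and its total gain is the change in the second coordinate, namely \(1\).

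The step needing the most care is the (2)\(\Rightarrow\)(3) symmetry argument. Without \(\sigma\), reachability in one direction would not by itself give strong connectivity. I would state the automorphism property explicitly, including for loop arcs.

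For the complexity claim, build the lift in \(O(|V|+|E|)\) time, since it has size twice that of \(\Gamma_{\mathrm{tr}}\). Then run Tarjan's strongly connected components algorithm once. This decides criterion (3) for every \(X\) simultaneously in linear time. Alternatively, a single breadth-first search from \((X,0)\) decides (2) for a fixed \(X\). Gauge invariance from (4.3) is not needed for the proof. It only confirms that the answer does not depend on the chosen class orders.
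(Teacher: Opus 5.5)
Your proposal is correct and follows the paper's proof: you use the same two-sheet lift with arcs $(Y,b)\to(Z,b+\gamma(e))$ and the same unique lifting of walks that records the gain sum, and your sheet-swap involution $\sigma$ is exactly the paper's step of replaying the odd closed walk from sheet~1 to return to sheet~0. Your explicit (3)$\Rightarrow$(1) projection and the Tarjan-based complexity argument just spell out what the paper leaves implicit.
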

\begin{proof} Replace every gain-\(g\) arc \(X\to Y\) by the two
lifted arcs \((X,p)\to(Y,p+g)\), \(p\in\mathbb Z_2\). A directed walk
lifts uniquely and its second coordinate records the sum of its gains.
It closes on the first coordinate and changes the second coordinate
precisely when its total gain is odd. Repeating the same odd closed walk
from sheet 1 returns to sheet 0, so the two lifted vertices are mutually
reachable. \end{proof}

\begin{corollary}[Potential criterion]\label{corollary-5.2-potential-criterion}

Let \(K\) be a strongly connected component of the base translation
graph. Every directed closed walk in \(K\) has even gain if and only if
there is a potential \(\varepsilon:V(K)\to\mathbb Z_2\) such that every
arc \(e:u\to v\) in \(K\) satisfies

\[
 \gamma(e)=\varepsilon(u)+\varepsilon(v).
\tag{5.1}
\]

Thus the odd gate is exactly failure of the gain labelling to be a
coboundary on some strongly connected component.

\end{corollary}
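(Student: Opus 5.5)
The plan is to combine Theorem 5.1 with a spanning-tree construction inside $K$. Since $K$ is strongly connected, every vertex of $K$ lies on closed walks through any base vertex. So "every closed walk in $K$ has even gain" is equivalent, by Theorem 5.1 applied at each $X\in V(K)$, to saying that $(X,0)$ and $(X,1)$ are never mutually reachable in the parity lift restricted to $K$. I will prove the two directions separately.

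\textbf{Potential $\Rightarrow$ even gain.} Assume (5.1) holds. For a closed walk $u_0\to u_1\to\cdots\to u_k=u_0$, the gain sum is
\[
\sum_i\bigl(\varepsilon(u_{i-1})+\varepsilon(u_i)\bigr).
\]
Each vertex occurrence is counted twice in this sum, so it vanishes in $\mathbb Z_2$. Loops are covered too: a loop at $u$ has gain $2\varepsilon(u)=0$, which matches the requirement that loops have even gain. This is the gauge invariance remark after (4.3).

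\textbf{Even gain $\Rightarrow$ potential.} Fix a base vertex $X\in K$. For each $v\in K$ choose a directed walk $P_v$ from $X$ to $v$, which exists by strong connectivity, and set $\varepsilon(v)$ equal to the gain of $P_v$. The main step is showing this is well defined. Take two walks $P,P'$ from $X$ to $v$ and a walk $R$ from $v$ back to $X$. Both $PR$ and $P'R$ are closed walks, hence have even gain, so $\gamma(P)=\gamma(R)=\gamma(P')$ in $\mathbb Z_2$. In lift language, the set of vertices of $K\times\mathbb Z_2$ reachable from $(X,0)$ meets each fibre $\{v\}\times\mathbb Z_2$ exactly once, and $\varepsilon$ reads off that sheet.

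Now take an arc $e:u\to v$ in $K$. The walk $P_u e$ runs from $X$ to $v$, so well-definedness gives $\varepsilon(v)=\varepsilon(u)+\gamma(e)$. Rearranging in $\mathbb Z_2$ yields (5.1).

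\textbf{Expected obstacle.} The only delicate point is that directed walks cannot be reversed as they can in undirected gain graphs. The return walk $R$ supplied by strong connectivity is what substitutes for reversal, and this is exactly why the statement is restricted to a strongly connected component.

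\textbf{Final sentence.} The closing claim follows immediately. An odd closed directed walk lies entirely within a single strongly connected component, since all its vertices are mutually reachable. So an odd walk exists if and only if, on some component $K$, no potential exists, meaning the gain labelling restricted to $K$ is not a coboundary.
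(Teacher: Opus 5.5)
Your proof is correct and is essentially the paper's argument. You define $\varepsilon$ as the gain of a walk from a fixed root, get well-definedness by appending a return walk and comparing the two closed walks, obtain (5.1) by extending $P_u$ by the arc, and prove the converse by telescoping; the opening appeal to Theorem 5.1 and the parity-lift remark are harmless glosses that the argument does not actually use.
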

\begin{proof} Fix a root \(r\in K\). For every \(v\in K\), choose a
directed path \(P_v:r\to v\) and set \(\varepsilon(v)\) equal to its
total gain. This is independent of the chosen path: append any directed
path \(v\to r\) and compare the two resulting closed walks. Comparing
\(P_u\), an arc \(u\to v\), and \(P_v\) gives (5.1). Conversely, (5.1)
telescopes to zero around every closed walk. \end{proof}

\begin{theorem}[Transport of positive certificates]\label{theorem-5.3-transport-of-positive-certificates}

If \(\varphi:(G,\Pi)\to(G',\Pi')\) is an isomorphism of partitioned
directed graphs, allowing an arbitrary bijection of blocks, then every
witness for \(\mathsf H(G,\Pi)=1\) transports to one for \((G',\Pi')\).
Hence \(\mathsf H\) is an isomorphism invariant.

\end{theorem}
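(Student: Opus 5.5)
We will transport every component of a witness along the vertex bijection \(\varphi:V\to V'\). An isomorphism of partitioned directed graphs is an edge-preserving vertex bijection with induced edge bijection \(\varphi_E:E\to E'\), together with a block bijection \(\beta:\Pi\to\Pi'\) such that \(\varphi_E(C)=\beta(C)\). The first step is the conjugation identity at the level of generators. By (4.1), \((u,v)\in T_C\) iff \((\varphi u,\varphi v)\in T'_{\beta(C)}\); that is, \(T'_{\beta(C)}=\varphi\circ T_C\circ\varphi^{-1}\) as relations. Since conjugation by a bijection commutes with relational composition, the letterwise renaming \(w\mapsto\beta(w)\) of words gives
\[
 T'_{\beta(w)}=\varphi\circ T_w\circ\varphi^{-1}
\]
for every word, including \(T_\varepsilon=\operatorname{Id}_V\). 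Consequently conjugation is an isomorphism \(S(\Pi)\cong S(\Pi')\). It preserves the properties used in the predicates: being a partial function, being the identity, images of sets, orbits of pairs, and their eventual periodicity and period.

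The second step is to show \(\varphi(Q(\Pi))=Q(\Pi')\). We argue by induction on refinement rounds in the recursion of Appendix A. Both runs start from the indiscrete partition. If the round-\(k\) partitions correspond under \(\varphi\), then for each generator the set of classes reachable from \(u\) under \(T_C\) corresponds to the set reachable from \(\varphi u\) under \(T'_{\beta(C)}\). Hence the round-\((k+1)\) partitions correspond as well. The stable partitions therefore correspond, and response and enabledness profiles of vertices are carried to those of their images, indexed through \(\beta\).

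The third step is to check the WPU and \(\mathsf H\) clauses one at a time under the map \((\text{vertex},\text{generator},\text{word},\text{class})\mapsto(\varphi\,\cdot,\beta\,\cdot,\beta\,\cdot,\varphi\,\cdot)\). Write, offload, preserve, writer-independence and use each assert equalities or inequalities of classes, images and profiles, so each transfers by the previous two steps. Record systems, their supports, and the strict nesting or proper overlap of supports transfer because \(\varphi\) is a bijection on \(V\). A translation word \(w:X\to Y\) becomes \(\beta(w):\varphi X\to\varphi Y\) with the transported ordered presentations. It induces the conjugate bijection of classes, so it preserves order iff the original does, and by (4.2) the gain is unchanged. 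Reading words and distinguished phases also transfer. A closed translation walk is carried to a closed walk with the same total gain, so it remains odd.

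Applying the same argument to \(\varphi^{-1}\) gives the converse, so \(\mathsf H(G,\Pi)=\mathsf H(G',\Pi')\).

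The main obstacle is the behavioural-quotient step. The clauses mentioning "quotient response profiles" and "enabledness profiles" are indexed by generator names, and these names are permuted by \(\beta\). We must verify that the Appendix A recursion is equivariant under simultaneously renaming generators and relabelling classes, and that no clause depends on a fixed ordering of blocks (for instance through lexicographic tie-breaking). If some choice is order-dependent, we will note that the predicate only requires existence and comparison of profiles, which is order-free. The transported witness therefore still satisfies the predicate, even if a canonical search would have produced a different one.
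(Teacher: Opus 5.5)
Your proposal is correct and follows the same route as the paper's proof: conjugate every generator and word relation by the vertex bijection $\varphi$ (with blocks renamed through $\beta$), transport the behavioural quotient and every witness component, and check that all five existential gates survive. The one small difference concerns gains. You carry the ordered class presentations along with $\varphi$, so each gain is literally unchanged. The paper instead allows local orders to be re-chosen and appeals to gauge invariance of closed-walk parity. Either suffices, since the presentation is part of the existential witness.
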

\begin{proof} Vertex relabelling conjugates every generator relation
and every word relation. It transports the behavioural quotient, forks,
pair orbits, supports, record systems and translations bijectively.
Reversing the local order at a record system changes incident gains by a
gauge transformation and does not change the parity of a closed walk.
All five existential conditions are preserved. \end{proof}

\section{Finite catalogue and computational setup}\label{finite-catalogue-and-computational-setup}

\subsection{Catalogue construction}\label{catalogue-construction}

Let terms be generated by

\[
\mathcal T ::= A\mid V\mid P(\mathcal T,\mathcal T)
                    \mid R(\mathcal T,\mathcal T),
\]

with node size \(|A|=|V|=1\) and \(|P(s,t)|=|R(s,t)|=1+|s|+|t|\). Let
\(\mathcal T_3\) contain the terms of size at most 3. An admissible
represented rewrite rule is \(R(p,q)\) with \(p,q\in\mathcal T_3\),
subject to the condition that \(V\) may occur in \(q\) only if it occurs
in \(p\). The symbol \(V\) is one consistently bound wildcard: repeated
occurrences in a pattern must match the same term. There are 79
admissible rules.

The data term is \(d\in\{A,P(A,A)\}\). A seed is the canonically sorted
multiset \(\{d,r\}\) or \(\{d,r_i,r_j\}\), where \(i\le j\) in the
lexicographic ordering of the 79 rule strings. Sorting the resulting
6478 canonical ASCII state strings lexicographically and numbering them
from zero defines the row indices.

In a current state, any represented rule \(R(p,q)\) may be applied to
any nonplaceholder occurrence matching \(p\) in any component. That
occurrence is replaced by the corresponding instance of \(q\), obtained
with the same binding of \(V\), and the components are sorted again. A
state is valid when every non-rule component is closed and no rule
contains \(V\) on its right without \(V\) on its left. Retain valid
rewrites whose changed component has size at most 255, and explore all
transitions whose source is at distance less than 3 from the seed.

For a canonical state, its \textbf{rule signature} is the ordered tuple
of components whose outermost symbol is \(R\), and its \textbf{data
signature} is the ordered tuple of all remaining components. A
qualifying recurrent core is a cyclic strongly connected component with
at least two distinct rule signatures, at least two distinct data
signatures, and an internal transition that changes the rule signature.
For each row below, the selected core is the unique largest qualifying
core. Its vertices are ordered lexicographically by canonical state
string. Its directed edges are the distinct ordered pairs of distinct
core states joined by at least one internal one-step rewrite. This
defines the loopless directed graph \(G_s\) associated with row \(s\).

\textbf{Table 6.1. Catalogue rows and selected recurrent cores.}

\begin{longtable}[]{@{}rlrr@{}}
\toprule\noalign{}
Row \(s\) & Canonical seed string \(\sigma_s\) & \(|V(G_s)|\) &
\(|E(G_s)|\) \\
\midrule\noalign{}
\endhead
\bottomrule\noalign{}
\endlastfoot
226 & \(\mathtt{\{A;R(A,R(A,A));R(V,A)\}}\) & 20 & 55 \\
3175 & \(\mathtt{\{A;R(V,A);R(V,P(A,A))\}}\) & 18 & 56 \\
3179 & \(\mathtt{\{A;R(V,A);R(V,R(A,A))\}}\) & 18 & 56 \\
3333 & \(\mathtt{\{P(A,A);R(A,P(A,A));R(P(A,V),V)\}}\) & 25 & 71 \\
3343 & \(\mathtt{\{P(A,A);R(A,P(A,A));R(P(V,A),V)\}}\) & 25 & 71 \\
3388 & \(\mathtt{\{P(A,A);R(A,P(A,A));R(V,P(A,A))\}}\) & 29 & 108 \\
4390 & \(\mathtt{\{P(A,A);R(P(A,V),V);R(V,P(A,V))\}}\) & 22 & 66 \\
4986 & \(\mathtt{\{P(A,A);R(P(V,A),V);R(V,P(V,A))\}}\) & 22 & 66 \\
6414 & \(\mathtt{\{P(A,A);R(V,A);R(V,P(A,A))\}}\) & 29 & 118 \\
6418 & \(\mathtt{\{P(A,A);R(V,A);R(V,R(A,A))\}}\) & 26 & 86 \\
\end{longtable}

The ancillary catalogue program independently reconstructs all 6478 seed
strings and the ten row-to-graph mappings in Table 6.1.

\subsection{Selection protocol}\label{selection-protocol}

The ten systems in Table 6.1 were retained by a preceding fixed screen
for multiple record systems, reciprocal translation and strict support
localization. Their target edge partitions were fixed by the
opposite-diamond rule before any profile-matched control was generated.
A preliminary bounded search with word length at most 10 found
target-positive outputs for rows \(226\) and \(3388\) within the first
1000 distinct relation products. For rows \(4390\) and \(4986\), the
cap-1000 screen had not yet found odd holonomy; both targets were
positive within the first 5000. Each of these four outputs is reproduced
by the archived deterministic target replay; the bound records the cap
at which its witness was found. The present specificity study is
conditional on those four target positives. It does not treat the ten
rows as a random population.

Rows 4390 and 4986 are mirror-isomorphic. The exact 0-based vertex
permutation is

\[
(0,1,2,3,4,5,13,14,15,20,21,18,19,6,7,8,16,17,11,12,9,10).
\tag{6.1}
\]

It maps the directed edge set and target partition of row 4390 exactly
to those of row 4986. Thus the four labelled rows represent three target
isomorphism types.

\subsection{Exact profile data}\label{exact-profile-data}

\begin{longtable}[]{@{}
  >{\raggedleft\arraybackslash}p{(\linewidth - 12\tabcolsep) * \real{0.1429}}
  >{\raggedleft\arraybackslash}p{(\linewidth - 12\tabcolsep) * \real{0.1429}}
  >{\raggedleft\arraybackslash}p{(\linewidth - 12\tabcolsep) * \real{0.1429}}
  >{\raggedleft\arraybackslash}p{(\linewidth - 12\tabcolsep) * \real{0.1429}}
  >{\raggedleft\arraybackslash}p{(\linewidth - 12\tabcolsep) * \real{0.1429}}
  >{\raggedleft\arraybackslash}p{(\linewidth - 12\tabcolsep) * \real{0.1429}}
  >{\raggedleft\arraybackslash}p{(\linewidth - 12\tabcolsep) * \real{0.1429}}@{}}
\toprule\noalign{}
\begin{minipage}[b]{\linewidth}\raggedleft
Row
\end{minipage} & \begin{minipage}[b]{\linewidth}\raggedleft
\(|V|\)
\end{minipage} & \begin{minipage}[b]{\linewidth}\raggedleft
\(|E|\)
\end{minipage} & \begin{minipage}[b]{\linewidth}\raggedleft
Diamonds
\end{minipage} & \begin{minipage}[b]{\linewidth}\raggedleft
Blocks
\end{minipage} & \begin{minipage}[b]{\linewidth}\raggedleft
Profile-fibre size
\end{minipage} & \begin{minipage}[b]{\linewidth}\raggedleft
Radius-one neighbours
\end{minipage} \\
\midrule\noalign{}
\endhead
\bottomrule\noalign{}
\endlastfoot
226 & 20 & 55 & 27 & 22 & 15,\allowbreak{}847,\allowbreak{}827,\allowbreak{}683,\allowbreak{}193,\allowbreak{}902,\allowbreak{}390,\allowbreak{}078,\allowbreak{}572,\allowbreak{}488,\allowbreak{}000,\allowbreak{}000
& 1,120 \\
3388 & 29 & 108 & 103 & 12 &
15,\allowbreak{}604,\allowbreak{}417,\allowbreak{}679,\allowbreak{}510,\allowbreak{}820,\allowbreak{}243,\allowbreak{}381,\allowbreak{}697,\allowbreak{}037,\allowbreak{}093,\allowbreak{}240,\allowbreak{}004,\allowbreak{}337,\allowbreak{}457,\allowbreak{}899,\allowbreak{}560,\allowbreak{}000 &
3,370 \\
4390 & 22 & 66 & 48 & 12 &
1,\allowbreak{}309,\allowbreak{}167,\allowbreak{}244,\allowbreak{}383,\allowbreak{}322,\allowbreak{}031,\allowbreak{}981,\allowbreak{}160,\allowbreak{}642,\allowbreak{}908,\allowbreak{}108,\allowbreak{}347,\allowbreak{}392,\allowbreak{}000 & 1,594 \\
4986 & 22 & 66 & 48 & 12 & same as 4390 & 1,594 \\
\end{longtable}

The profiles are

\[
\begin{aligned}
\lambda_{226}&=(23,3,3,3,2,2,2,2,2,1^{13}),\\
\lambda_{3388}&=(65,25,3,3,2,2,2,2,1,1,1,1),\\
\lambda_{4390}=\lambda_{4986}&=(31,7,7,6,6,2,2,1,1,1,1,1).
\end{aligned}
\tag{6.2}
\]

The fibre sizes follow from (2.1), and the neighbourhood sizes follow
from (2.5); direct enumeration of every radius-one partition gives the
same values. The automorphism groups have orders \(16,16,2,2\) for rows
\(226,3388,4390,4986\), respectively. The target orbit has size one in
every row by Theorem 3.1, while the orbits of the four displayed
radius-one countermodels have sizes \(1,4,1,1\).

\section{Radius-one countermodels}\label{radius-one-countermodels}

\subsection{Single-transposition countermodels}\label{single-transposition-countermodels}

\begin{theorem}[Local non-specificity]\label{theorem-7.1-local-non-specificity}

For each of the four labelled target-positive rows there is a partition
\(\Pi'\) such that

\[
\lambda(\Pi')=\lambda(\Pi^*),\qquad
\Pi'\sim_{\mathcal S_\lambda(E)}\Pi^*,\qquad
\mathsf H(G,\Pi')=1.
\tag{7.1}
\]

Every \(\Pi'\) is nonidentity, breaks an opposite-diamond generator, and
lies outside the automorphism orbit of \(\Pi^*\).

\end{theorem}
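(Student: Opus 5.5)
The theorem is a finite existential statement, so the plan is to exhibit and verify explicit witnesses row by row. For each of the four rows $s\in\{226,3388,4390,4986\}$ we first construct $G_s$ and $\Pi^*=\Pi_{\mathrm{opp}}(G_s)$ from Table 6.1. Next we specify a single transposition (2.4): blocks $A,B\in\Pi^*$, an edge $e\in A$ and an edge $f\in B$. This gives $\Pi'=(\Pi^*\setminus\{A,B\})\cup\{A-e+f,\,B-f+e\}$. The profile equality $\lambda(\Pi')=\lambda(\Pi^*)$ holds by construction. Adjacency in $\mathcal S_\lambda(E)$ requires only that the move is not one of the trivial moves discarded in the derivation of (2.5). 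So we choose $A,B$ not both singletons, which guarantees $\Pi'\neq\Pi^*$. If $A,B$ are both two-element blocks, the move still yields a genuine neighbour, because complementary swaps merely coincide in pairs. As a sanity check, $d_{\mathrm{pair}}(\Pi^*,\Pi')=2(|A|+|B|-2)>0$ by (2.6).

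The heart of the proof is establishing $\mathsf H(G_s,\Pi')=1$. We do this by producing a finite certificate rather than by any search-completeness argument. The intended route is to run the bounded evaluator $D^{\max}_{B,L}$ on $(G_s,\Pi')$ and locate a positive output. Proposition 4.1 then converts that output into an $\mathsf H$-certificate once its predicates are rechecked directly. Concretely, the certificate lists the following:
\begin{enumerate}
\item the WPU witnesses, namely the source vertex, the two writer generators, the offload word, the preserve word with its eventually periodic pair orbit, and the use word, all checked against the behavioural quotient $Q(\Pi')$ computed by the recursion of Appendix A;
\item at least two record systems, together with their supports;
\item a reciprocal translation pair;
\item a nested or properly overlapping pair of supports;
\item a phase-distinguishing reading word;
\item a closed translation walk.
\end{enumerate}
For the odd-gain condition we invoke Theorem 5.1. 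We form the translation graph $\Gamma_{\mathrm{tr}}$ on the supplied systems and verify that $(X,0)$ reaches $(X,1)$ in the parity lift, or equivalently we exhibit the walk and sum its gains.

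The choice of transposition is the main obstacle. A random swap may destroy the behavioural distinctions or the odd cycle. To find good swaps, we would first enumerate the radius-one neighbourhood, whose size (1,120, 3,370, 1,594, 1,594) is given by (2.5) and the table in Section 6.3. We would prioritise swaps that exchange edges lying outside the supports and translation words of the target certificate for $\Pi^*$. Such swaps leave most generator relations $T_C$ unchanged, so the original witness plausibly survives with minor repairs. For row 4986 no separate search is needed: we transport the row-4390 witness along the permutation (6.1). That permutation maps $\Pi^*_{4390}$ to $\Pi^*_{4986}$, hence maps radius-one neighbours to radius-one neighbours, and Theorem 5.3 carries the certificate over. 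As a safeguard, every witness is rechecked by an independent implementation.

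The remaining claims follow from earlier results. $\Pi'$ is nonidentity because it is a neighbour of $\Pi^*$ and hence distinct from it. Since $\Pi'\neq\Pi^*$ and $\lambda(\Pi')=\lambda(\Pi^*)$, Corollary 3.3 (via Theorem 3.2) shows that $\Pi'$ breaks some generating opposite-diamond identification. The same corollary, resting on Theorem 3.1, shows that the $\operatorname{Aut}(G)$-orbit of $\Pi^*$ is $\{\Pi^*\}$, so $\Pi'$ lies outside it.
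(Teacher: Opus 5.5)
Your proposal follows the paper's proof. It exhibits one explicit cross-block transposition per row, certifies $\mathsf H(G,\Pi')=1$ by direct recomposition of stored finite words rather than by any completeness of the search, and derives nonidentity, a broken generator and orbit separation from Corollary 3.3. The paper actually names the swaps, for instance $(0,1)\leftrightarrow(0,4)$ in row 226, where you only describe how to search for them; your transport of the row-4390 witness to row 4986 along (6.1) via Theorem 5.3 is a valid shortcut, and it agrees with the paper's table because (6.1) fixes vertices $0$, $1$ and $4$.
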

\begin{proof} The exact edge transpositions are listed below. For each
row, the ancillary certificate records the resulting partition and the
finite relation words witnessing \(\mathsf H=1\). Direct recomposition
verifies the edge cover, profile, transposition, co-membership distance
and odd-gain cycle. Orbit separation then follows from Corollary 3.3.
\end{proof}

\begin{longtable}[]{@{}rlrr@{}}
\toprule\noalign{}
Row & Swapped edges & \(d_{\mathrm{pair}}\) & Broken opposite pairs \\
\midrule\noalign{}
\endhead
\bottomrule\noalign{}
\endlastfoot
226 & \((0,1)\leftrightarrow(0,4)\) & 48 & 5 \\
3388 & \((0,1)\leftrightarrow(0,14)\) & 176 & 12 \\
4390 & \((0,1)\leftrightarrow(1,4)\) & 10 & 1 \\
4986 & \((0,1)\leftrightarrow(1,4)\) & 10 & 1 \\
\end{longtable}

The relation searches used to discover the displayed witnesses did not
reach a complete semigroup. This is not used as evidence of absence.
Each positive claim rests on explicit finite words and a finite gain
cycle.

\subsection{A stronger incidence-matched countermodel}\label{a-stronger-incidence-matched-countermodel}

Let \(c_s\) denote the fixed target role assignment in row \(s\), with
its actual target role labels. The stronger fibres from (2.8) exhibit an
exact rigidity--flexibility split.

\begin{theorem}[Incidence rigidity and flexibility]\label{theorem-7.2-incidence-rigidity-and-flexibility}

\[
 |\mathcal I(c_{226})|
 =|\mathcal I(c_{4390})|
 =|\mathcal I(c_{4986})|=1,
 \qquad
 |\mathcal I(c_{3388})|\ge 43.
\tag{7.2}
\]

Moreover, row 3388 has a nonidentity assignment
\(c_{\mathrm{inc}}\in\mathcal I(c_{3388})\) with
\(\mathsf H\!\left(G,\Pi(c_{\mathrm{inc}})\right)=1\). It changes 12
directed edges and is obtained by exchanging roles 0 and 1 on

\[
\begin{split}
 &(0,1),(18,1),(18,17),(14,17),(14,15),(1,15),\\
 &(1,0),(17,0),(17,18),(15,18),(15,14),(0,14).
\end{split}
\tag{7.3}
\]

\end{theorem}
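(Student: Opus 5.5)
The plan splits Theorem~7.2 into three finite claims: rigidity of $\mathcal I(c)$ in rows 226, 4390 and 4986; the lower bound $|\mathcal I(c_{3388})|\ge 43$; and the positive certificate for the 12-edge trade. Row 4986 reduces to row 4390 via the permutation (6.1). That map sends the edge set and the target partition exactly, so its block correspondence can be chosen to match the role labels. It then gives a margin-preserving bijection $\mathcal I(c_{4390})\to\mathcal I(c_{4986})$, so only three rows need to be treated directly.

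\emph{Rigidity.} I would cast $\mathcal I(c)$ as the set of integer solutions of a transportation system. In the split bipartite graph with vertex copies $v^+$ and $v^-$, each edge $u^+v^-$ receives exactly one role, and the margins $d^\pm_r$ are prescribed at every vertex copy. A nonidentity $c'\in\mathcal I(c)$ differs from $c$ on a set of edges forming a balanced change graph. At each vertex copy and for each role, the number of edges entering role $r$ equals the number leaving role $r$.

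The key step is the multi-role extension of Proposition~2.1. Record each changed edge as an arc $c(e)\to c'(e)$ labelled by the edge. Balance at each vertex copy lets one pair an $r\to s$ change with an $s\to t$ change at the same copy, and repeating this yields a closed trail in the bipartite graph along which the role sequence is consistent. So $\mathcal I(c)$ is a singleton iff no such closed, role-consistent trail exists.

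For the rigid rows I would certify absence by exhaustive search. A DFS over the split bipartite graph (40 or 44 copies, 55 or 66 edges) enumerates candidate closed trails with a role transition at each step; alternatively, an exact ILP or SAT solve shows that the margin system has a unique 0/1 solution. I expect this to be the main obstacle, because the certificate is an exhaustive negative and must be argued complete.

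A useful pruning observation: an edge $e$ can change only if both endpoints carry a second role, that is, some $r\neq c(e)$ with $d^+_r(u)>0$ and $d^-_r(v)>0$. Repeatedly deleting edges that fail this test, and then any edge no longer on a cycle, should collapse these small graphs to nothing or to a tiny residue that can be checked by hand.

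\emph{Flexibility in row 3388.} For the displayed list (7.3), I would verify that consecutive edges share alternately a tail copy and a head copy. The list closes as $0^+\!-1^-\!-18^+\!-17^-\!-14^+\!-15^-\!-1^+\!-0^-\!-17^+\!-18^-\!-15^+\!-14^-\!-0^+$, and the roles alternate $0,1,0,1,\dots$ under $c_{3388}$. Proposition~2.1 then places $c_{\mathrm{inc}}$ in $\mathcal I(c_{3388})$, and it is nonidentity because 12 edges change.

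For $\mathsf H(G,\Pi(c_{\mathrm{inc}}))=1$ I would check the archived finite witness directly, as in Theorem~7.1 and Proposition~4.1. That witness supplies the record systems, translations, localization and reading words, plus an odd cycle, which Theorem~5.1 decides in linear time.

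For the bound $\ge 43$, I would exhibit 43 distinct assignments: the identity together with 42 alternating cycles, or their edge-disjoint combinations, found by enumerating alternating cycles between role pairs. Each is verified through Proposition~2.1, and they are pairwise distinct as role maps. Only a lower bound is claimed, so no completeness argument is needed here.
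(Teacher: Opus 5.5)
\textbf{The gap is in the rigidity part.} You call the ``multi-role extension of Proposition~2.1'' the key step, but the equivalence you draw from it fails in both directions.

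\emph{A role-consistent trail does not give a fibre element.} Shift roles along a closed trail $e_1,\dots,e_k$ by $c'(e_i)=c(e_{i+1})$. At the copy between $e_i$ and $e_{i+1}$, the margin vector changes by $\mathbf 1_{c(e_{i+2})}-\mathbf 1_{c(e_i)}$. So a cycle through distinct copies stays in $\mathcal I(c)$ only if it alternates between two roles; a longer trail stays in only if these pass contributions cancel copy by copy. Otherwise ``role-consistent'' just asks consecutive edges to have different target roles. Such trails are plentiful, and finding one proves nothing.

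\emph{A fibre element does not give a trail.} With three or more roles, the change set of some $c'\in\mathcal I(c)$ need not be Eulerian. For example, a copy may carry exactly three changed edges $1\to2$, $2\to3$, $3\to1$. Your pairing of an $r\to s$ change with an $s\to t$ change defines a permutation of oriented changed edges. Its cycles are closed walks that may traverse an edge in both directions, not trails. So an empty trail search would not certify rigidity either.

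\emph{What should replace it.} The certificate has to come from the margin system itself. Your fallback (a unique solution by ILP/SAT) is sound, and so is your pruning remark: if $c'(u,v)=r\ne c(u,v)$, then some other edge at $u^+$ and some other edge at $v^-$ with target role $r$ must leave role $r$. Iterating this local-cardinality implication on edge-role domains is exactly the paper's proof. It reduces every domain to its target singleton, with 9, 17 and 17 recorded forcing steps. Drop the trail characterization and present that forcing (or solver) certificate as the proof.

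\textbf{The rest is fine or differs only mildly.} Your handling of the 12-edge trade matches the paper: (7.3) closes as an alternating $0/1$ cycle, Proposition~2.1 gives membership, and $\mathsf H=1$ comes from recomposing stored words.

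Reducing row 4986 to row 4390 via (6.1) is a valid economy the paper does not use; the paper keeps a separate trace for each row. You do not need the labels to line up. The transported assignment $c_{4390}\circ\varphi^{-1}$ equals $\tau\circ c_{4986}$ for some role bijection $\tau$, and then $c'\mapsto\tau^{-1}\circ c'\circ\varphi^{-1}$ is a bijection $\mathcal I(c_{4390})\to\mathcal I(c_{4986})$.

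For the bound $\ge 43$, the paper uses $1+32+10$:
\begin{itemize}
\item the target;
\item the 32 distinct 12-edge $0/1$ trades from the exhaustive census;
\item ten stored solver-found assignments changing 24 edges each, which are disjoint from the 12-edge family by count.
\end{itemize}
Nothing guarantees that two-role cycles and their edge-disjoint unions alone reach 42 until you compute them. Any alternative that is not a two-role trade must be checked directly against the margin table rather than through Proposition~2.1.
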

\begin{proof} In the bipartite tail--head representation, the edges in
(7.3) form an alternating cycle for roles 0 and 1. Proposition 2.1
therefore proves every equality of incoming and outgoing role margins.
The explicit assignment has the same carrier and fixed role sizes as the
target and is nonidentity both with and without role labels. Direct
composition of the stored relation words verifies reciprocal
translation, strict localization, recorded change and odd gain; hence
\(\mathsf H=1\).

For rows 226, 4390 and 4986, necessary local-cardinality implications
reduce every initially possible edge-role domain to its target
singleton. The ancillary forcing traces list 9, 17 and 17 reductions,
respectively, and each step follows from the fixed local cardinalities.
Thus no non-target fixed-role assignment exists. For row 3388, the
exhaustive shortest-cycle census gives 32 distinct non-target
assignments, each preserving every fixed-role incidence margin and
changing 12 edge roles. The ten stored CSP alternatives also preserve
every margin but each changes 24 edge roles, so the two families are
disjoint. Together with the target, they certify
\(|\mathcal I(c_{3388})|\ge 1+32+10=43\), proving the lower bound in
(7.2). \end{proof}

The length 12 is minimal within the exhaustive two-role
alternating-cycle search, not among arbitrary multi-role trades. All
shortest cycles use roles 0 and 1. The search enumerates 192
starting-edge encodings; each resulting trade is counted once for each
of its six role-0 edges, yielding 32 distinct cycle trades and 32
distinct unlabelled partitions. The displayed \(c_{\mathrm{inc}}\)
belongs to this family. The other three rows have no two-role
alternating cycle, consistently with their stronger exact rigidity.
Permuting labels of equal-sized blocks does not produce a new unlabelled
partition; the singleton statement concerns any candidate whose blocks
can be aligned with the target roles so that the full rolewise margin
table agrees.

\section{A deterministic finite control family}\label{a-deterministic-finite-control-family}

For each retained row \(s\), let \(\sigma_s\) be its canonical seed
string in Table 6.1. Define a deterministic family of 64 controls as
follows. For an edge \(e=(u,v)\) and \(k\in\{0,\ldots,63\}\), rank by
the SHA-256 digest of the canonical compact UTF-8 JSON serialization of
the tuple

\[
 (\texttt{v616-profile-null},\sigma_s,k,u,v).
\tag{8.1}
\]

Break a hypothetical digest tie lexicographically by \((u,v)\). Sort the
target block sizes in nonincreasing order, cut the ranked edge list into
consecutive blocks of those sizes, sort each block, and finally
canonicalize the unlabelled block collection. This construction
preserves the exact carrier, directed edge set and complete block-size
profile. It is deterministic but is not claimed to sample the fibre
uniformly.

Write \(\Pi_k^{(s)}\) for the partition produced from row \(s\) at index
\(k\). For row 226 we abbreviate \(\Pi_k^{(226)}\) to \(\Pi_k\).

For every one of the 64 controls in each of the four rows, the bounded
evaluator used a cap of 1000 distinct relation-semigroup elements and a
maximum relation-word length of 10.

A complete calculation of all 64 declared indices in every row gives:

\begin{longtable}[]{@{}
  >{\raggedleft\arraybackslash}p{(\linewidth - 8\tabcolsep) * \real{0.2000}}
  >{\raggedleft\arraybackslash}p{(\linewidth - 8\tabcolsep) * \real{0.2000}}
  >{\raggedleft\arraybackslash}p{(\linewidth - 8\tabcolsep) * \real{0.2000}}
  >{\raggedleft\arraybackslash}p{(\linewidth - 8\tabcolsep) * \real{0.2000}}
  >{\raggedleft\arraybackslash}p{(\linewidth - 8\tabcolsep) * \real{0.2000}}@{}}
\toprule\noalign{}
\begin{minipage}[b]{\linewidth}\raggedleft
Row
\end{minipage} & \begin{minipage}[b]{\linewidth}\raggedleft
Nonidentity controls
\end{minipage} & \begin{minipage}[b]{\linewidth}\raggedleft
Bounded positive outputs
\end{minipage} & \begin{minipage}[b]{\linewidth}\raggedleft
Exact negatives
\end{minipage} & \begin{minipage}[b]{\linewidth}\raggedleft
Unresolved controls with no witness found
\end{minipage} \\
\midrule\noalign{}
\endhead
\bottomrule\noalign{}
\endlastfoot
226 & 64 & 7 & 1 & 56 \\
3388 & 64 & 19 & 0 & 45 \\
4390 & 64 & 23 & 0 & 41 \\
4986 & 64 & 29 & 0 & 35 \\
\textbf{Total} & \textbf{256} & \textbf{78} & \textbf{1} &
\textbf{177} \\
\end{longtable}

Every one of the 256 controls is a different nonidentity partition
within its row. Every positive entry is a positive output of the
declared bounded evaluator. Four radius-one positives and the target and
\(\Pi_0\) positives used in Theorem 8.1 are additionally archived as
portable word-level certificates. The 177 no-witness entries have
incomplete relation searches and are therefore left unclassified.

The observed fraction \(78/256\) is an exact fraction in this declared
finite family only. It is not an estimate of the density of positives in
the full profile fibres, whose sizes range from 35 to 53 decimal digits.

\begin{theorem}[Strict intermediate selectivity on row 226]\label{theorem-8.1-strict-intermediate-selectivity-on-row-226}

On the profile fibre of row 226,

\[
\mathsf H(G,\Pi^*)=1,
\qquad
\mathsf H(G,\Pi_0)=1,
\qquad
\mathsf H(G,\Pi_{50})=0.
\tag{8.2}
\]

Both \(\Pi_0\) and \(\Pi_{50}\) are nonidentity partitions with the same
target profile. The negative is exact: the relation semigroup of
\(\Pi_{50}\) closes after 885 elements and contains no
write-preserve-use certificate.

\end{theorem}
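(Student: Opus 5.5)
The statement splits into three independent claims of different logical types, and I will treat each separately. Two are existential positives: $\mathsf H(G,\Pi^*)=1$ and $\mathsf H(G,\Pi_0)=1$. One is a universal negative: $\mathsf H(G,\Pi_{50})=0$. The auxiliary assertions are routine recomputations from the deterministic construction (8.1). These are that $\Pi_0$ and $\Pi_{50}$ lie in $\mathcal P_{\lambda_{226}}(E)$ and are nonidentity. Cutting the ranked edge list by the sorted target sizes preserves the profile by construction. Nonidentity follows by comparing $M(\Pi_k)$ with $M(\Pi^*)$: it suffices to exhibit one pair with $d_{\mathrm{pair}}>0$, and then Corollary 3.3 gives orbit separation for free.

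For the two positives I will invoke Proposition 4.1. The bounded evaluator $D^{\max}_{B,L}$ returned $1$ on $\Pi^*$ (cap 1000, length at most 10, Section 6.2) and on $\Pi_0$ (Section 8 table). The plan is to extract from each output the archived word-level certificate: the WPU tuples, at least two record systems, a reciprocal translation pair, a nested or properly overlapping support pair, a phase-reading word, and a translation cycle. I will then recompose each word directly as a product of the generators $T_C$ from (4.1), and recompute the behavioural quotient $Q(\Pi)$ by the refinement recursion of Appendix A. With these in hand, I check the five WPU clauses and the five $\mathsf H$ clauses predicate by predicate. For the odd-gain clause I will use Theorem 5.1: I build the parity lift of the supplied translation arcs and exhibit a lifted path from $(X,0)$ to $(X,1)$. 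By Theorem 5.3, the result does not depend on the chosen local orders.

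The negative is the main obstacle, because $\mathsf H$ is existential over arbitrary words and no bounded search can refute it. I will make it exact by closing the semigroup. Starting from the generators $\{T_C: C\in\Pi_{50}\}$, I form all right products by generators, deduplicating the relations as Boolean $20\times 20$ matrices. The process terminates when a round adds nothing, and I will verify that closure is reached at exactly 885 elements, so that $S(\Pi_{50})$ is the full set of nonempty-word relations. I then add $\operatorname{Id}_V$ for the offload clause W2, since that clause admits $\varepsilon$. Because every clause of a WPU certificate quantifies over vertices, generator names, quotient classes and relations in $S(\Pi_{50})\cup\{\operatorname{Id}_V\}$, all finite, an exhaustive enumeration decides WPU existence. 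The enumeration runs over write forks (a source $x$ and two generators reaching distinct $Q$-classes), offload relations, preserving relations acting as partial functions with a nontrivial non-merging eventually periodic orbit, the enabledness test, and use relations. I expect it to return nothing. Clause 1 of $\mathsf H$ then fails, since no record system can be formed at all, and this gives $\mathsf H(G,\Pi_{50})=0$.

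To make the negative trustworthy, I will organize the enumeration so that the write clause is tested first. This is the step where I would look for a short structural reason for failure, for instance that every fork outcome collapses into a single $Q(\Pi_{50})$-class, or that no preserving relation has a nontrivial orbit on any offloaded pair. I will also cross-check the closure count 885 and the empty WPU set with the independent standard-library implementation.
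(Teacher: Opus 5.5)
Your proposal is correct and follows essentially the paper's route. The two positives are explicit archived word-level witnesses verified by direct recomposition, and the negative comes from closing $S(\Pi_{50})$ at 885 relations under right multiplication by every generator, adjoining $\operatorname{Id}_V$ only as an offload, and exhaustively excluding every write-preserve-use certificate, so no record system and hence no $\mathsf H$-witness exists. On your structural guess: the paper finds 77 quotient-separated forks giving 44 offloaded pairs that satisfy W2--W3 and all admit use words, so the obstruction lies entirely in W4, not in the write clause (and your fork enumeration must also allow coinciding channels $A=B$, as W1 permits).
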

\begin{proof} The target and \(\Pi_0\) certificates are explicit
finite witnesses. For \(\Pi_{50}\), exhaustive closure gives 885
relations and is closed under right multiplication by every generator.
Among the 77 quotient-separated endogenous forks, allowing the identity
offload together with the 885 nonempty-word relations yields 44 distinct
offloaded pairs satisfying W2--W3. Every one of these 44 pairs admits a
nonidentity use relation, whereas none admits a nonidentity relation
whose pair orbit satisfies W4. Hence no pair supports a
write-preserve-use certificate. Since every witness for \(\mathsf H=1\)
begins with such a certificate, \(\mathsf H(G,\Pi_{50})=0\). \end{proof}

Thus \(\mathsf H\) is not determined by the profile, but it is also not
specific to the canonical target.

\section{Reproducibility}\label{reproducibility}

\subsection{Positive certificates}\label{positive-certificates}

The four radius-one results and the row-3388 incidence-matched result
are stored as explicit partitions and finite relation words. Their
verification requires only direct relation composition and the
predicates of Appendix A; it does not require repeating the bounded
discovery search. A second standard-library implementation independently
reconstructs the carriers, partitions, record systems, translations and
odd-gain cycles. The four target-positive outputs from the selection
protocol are separately recomputed by the deterministic evaluator at the
declared bounds.

\subsection{Exact finite exclusions}\label{exact-finite-exclusions}

The negative control in Theorem 8.1 includes the complete 885-element
relation semigroup and its closure data. The three incidence-rigidity
claims in Theorem 7.2 include forcing traces in which every domain
reduction follows from a local cardinality constraint. The exhaustive
alternating-cycle census certifies 32 distinct 12-edge assignments in
row 3388. The ten additional 24-edge assignments are stored explicitly,
so their disjoint contribution to the lower bound of 43 in (7.2) can be
checked without trusting a solver's summary count.

\subsection{Ancillary archive}\label{ancillary-archive}

The ancillary archive contains the explicit finite objects, independent
verification programs, exact search outputs and a SHA-256 manifest. Its
README gives standard-library commands for reproducing each claim.
Checksums verify the integrity of the archived bytes; the mathematical
evidence consists of the finite objects and the verifiable predicates
they satisfy.

\subsection{Generative-AI disclosure}\label{generative-ai-disclosure}

Text-to-text generative-AI tools were used during the preparation of
this manuscript to assist with mathematical exploration, drafting and
revising prose, translation, and code development and review. The author
takes full responsibility for the mathematical claims, references,
computations, software, and final text.

\section{Relation to previous work}\label{relation-to-previous-work}

Equivalence relations generated by opposite sides of squares occur in
domains of event structures, median graphs and relaxed square-property
relations (\citeproc{ref-Chepoi2012}{Chepoi 2012};
\citeproc{ref-HellmuthEtAl2015}{Hellmuth et al. 2015}). Our component
construction is a directed two-step specialization of that established
closure pattern. Its functoriality and universal coarsening property are
included for self-containment, not claimed as a new square-equivalence
theory.

An especially close analogy comes from true-concurrency models. In a
transition system with independence, opposite transitions in an
independence square generate an equivalence relation whose classes are
events; asynchronous transition systems instead take the event
assignment and event independence as part of the structure (see
\citeproc{ref-WinskelNielsen1995}{Winskel and Nielsen 1995, secs. 10 and 11.3.1}). Our input carries neither transition labels nor a declared
independence relation: every directed two-step diamond on four distinct
vertices present in \(G\) supplies both opposite-edge generators
automatically. Thus the resemblance is formal; we do not claim that
\(\Pi_{\mathrm{opp}}(G)\) equips \(G\) with either concurrency
structure.

Fixed-fibre perturbations are standard in conditional sampling and graph
null models (\citeproc{ref-DiaconisSturmfels1998}{Diaconis and Sturmfels
1998}; \citeproc{ref-MaslovSneppen2002}{Maslov and Sneppen 2002};
\citeproc{ref-FosdickEtAl2018}{Fosdick et al. 2018}). Markov-basis
methods formalize moves inside fibres with fixed sufficient statistics,
while degree-preserving graph switches provide a familiar network
example. At the labelled level, our fixed-profile state space is the
multislice and its swap graph is the classical transposition graph; our
unlabelled graph further quotients permutations of equal-sized role
classes (\citeproc{ref-Chase1973}{Chase 1973};
\citeproc{ref-FilmusODonnellWu2022}{Filmus, O'Donnell, and Wu 2022}).
The incidence-cycle move is likewise a standard fixed-margin switch. We
make no claim that these moves or the use of profile-matched comparison
partitions are new. The deterministic 64-index family is therefore not
described as a uniform sampler
(\citeproc{ref-ArtzyRandrupStone2005}{Artzy-Randrup and Stone 2005}).

Graph and edge-colouring equivalence is naturally taken modulo
automorphisms (\citeproc{ref-LehnerSmith2020}{Lehner and Smith 2020}).
Here the target orbit question has a short analytic answer because the
target is the component partition of a canonical auxiliary graph.
General-purpose canonical-labelling methods remain useful for
independent orbit calculations (\citeproc{ref-McKayPiperno2014}{McKay
and Piperno 2014}; \citeproc{ref-McKay1998}{McKay 1998}).

The parity gate belongs to the theory of signed and gain graphs
(\citeproc{ref-Zaslavsky1982}{Zaslavsky 1982},
\citeproc{ref-Zaslavsky1989}{1989}). An odd \(\mathbb Z_2\)-gain cycle
is the standard obstruction to balance. The contribution here is not a
new balance theorem; it is the explicit reduction of the
record-translation condition to that standard object and its use in a
certified specificity test.

The behavioural quotient is a direct relational partition refinement of
the kind treated by Paige--Tarjan and later generic coalgebraic
algorithms (\citeproc{ref-PaigeTarjan1987}{Paige and Tarjan 1987};
\citeproc{ref-DorschEtAl2017}{Dorsch et al. 2017}). The contribution is
the certificate assembled from these standard ingredients and its exact
finite countermodels, not the refinement algorithm itself.

Finally, the work follows the certifying-algorithm principle
(\citeproc{ref-McConnellEtAl2011}{McConnell et al. 2011}): each positive
output is accompanied by a finite witness that an independent program
can check.

\section{Limitations}\label{limitations}

\begin{enumerate}
\def\labelenumi{\arabic{enumi}.}
\tightlist
\item
  The four labelled target positives represent only three isomorphism
  types.
\item
  The ten-system catalogue is frozen but not a random or exhaustive
  family of directed graphs.
\item
  The complete 64-index evaluation concerns a deterministic family, not
  the full profile fibre and not a uniform sample.
\item
  Profile matching in the four local witnesses preserves only the
  multiset of block sizes. It does not, by itself, preserve per-vertex
  coloured in/out degrees or local coloured motif counts. The separate
  row-3388 witness does preserve the complete fixed-role in/out table.
\item
  Every explicit positive certificate is exact. A missing witness from
  an incomplete relation search remains unresolved.
\item
  The general theorems about component partitions, fibre counts and gain
  parity are elementary ingredients. The nontrivial content of the paper
  is their combination with the exact certified countermodels and the
  complete negative control.
\item
  The property \(\mathsf H\) is existential, not the limit of a
  particular bounded search.
\item
  The incidence-matched positive occurs in one target isomorphism type.
  The 12-edge trade is minimal only within the two-role
  alternating-cycle construction, not among arbitrary multi-role trades.
  The other three labelled rows are exactly rigid under the fixed-role
  incidence invariant.
\item
  The predicate \(\mathsf H\) and the ten-system catalogue originate in
  the exploratory construction described in Section 1; neither is
  asserted to be a standard invariant or a representative population.
  The conclusions are exact specificity statements for this fixed
  predicate and catalogue.
\end{enumerate}

\section{Conclusion}\label{conclusion}

The opposite-diamond rule defines a canonical edge partition. It is the
unique partition with the same number of blocks that preserves every
generating opposite-edge identification. Nevertheless, a finite
relational certificate built from the induced block relations does not
identify that partition in the four retained catalogue rows. The failure
occurs at the nearest possible profile-preserving scale: a single
cross-block transposition.

For row 3388 the failure also survives a substantially stronger control:
every vertex retains its exact incoming and outgoing count in every
fixed role. The companion rigidity certificates show that this stronger
fibre collapses to the target in the other three labelled rows; together
with the explicit 12-edge alternative for row 3388, they establish this
distinction exactly.

The exact negative on the same profile fibre is equally important. It
shows that the certificate is not merely a disguised function of block
sizes. Its selectivity is real but insufficient for target
identification. That is the precise conclusion supported by the
mathematics and the computations.

Future work should replace the present fixed catalogue by an
isomorph-free or parameterized family, classify the local specificity
polynomial beyond radius one, and test controls that additionally
preserve coloured motif counts.

\appendix
\section{Direct certificate predicates}\label{appendix-a.-direct-certificate-predicates}

We give the direct finite predicates used to verify the certificates.
Write \(R(v)=\{w:(v,w)\in R\}\). Relation composition is oriented so
that

\[
 (R\circ S)(v)=\bigcup_{w\in R(v)}S(w);
\]

thus \(R\) is applied first and \(S\) second.

\subsection{Behavioural refinement}\label{a.1.-behavioural-refinement}

Let \(Q_0\) be the indiscrete equivalence relation on \(V\). Given
\(Q_t\), define the signature

\[
 \sigma_t(v)=
 \bigl(\{[w]_{Q_t}:w\in T_C(v)\}\bigr)_{C\in\Pi}.
\]

Vertices are \(Q_{t+1}\)-equivalent exactly when their signatures agree.
The sequence stabilizes on a finite carrier; call the stable quotient
\(Q\), write \(\kappa_Q(v)\) for the class of \(v\), and let the split
depth of two separated vertices be their first refinement round of
separation.

\subsection{Fork and offload}\label{a.2.-fork-and-offload}

\textbf{W1 (fork).} A fork consists of a state \(s\), channels
\(A,B\in\Pi\), and states \(\ell\in T_A(s)\), \(r\in T_B(s)\) with
\(\kappa_Q(\ell)\ne\kappa_Q(r)\). The channels may coincide when one
generator relation is nondeterministic.

\textbf{W2--W3 (offload and writer independence).} An offload word
\(o\), possibly \(\varepsilon\), is admissible for the fork when \(T_o\)
is single-valued at \((\ell,r)\),

\[
 T_o(\ell)=\{p\},\qquad T_o(r)=\{q\},
\]

with \(p\ne q\), \(\kappa_Q(p)\ne\kappa_Q(q)\), and the
writer-enabledness profiles agree:

\[
 \bigl(\mathbf 1[T_C(p)\ne\varnothing]\bigr)_{C\in\{A,B\}}
 =
 \bigl(\mathbf 1[T_C(q)\ne\varnothing]\bigr)_{C\in\{A,B\}}.
\tag{A.1}
\]

Repeated channel names are removed in (A.1).

\subsection{Persistent pair orbit}\label{a.3.-persistent-pair-orbit}

\textbf{W4 (persistent orbit).} For a relation \(R\), define its partial
action on ordered state pairs by

\[
 R^{(2)}(u,v)=(u',v')
\]

when \(R(u)=\{u'\}\) and \(R(v)=\{v'\}\). Starting from \((p,q)\), a
\textbf{persistent pair orbit} for a nonempty word \(z\) with
\(T_z\ne\operatorname{Id}_V\) is an eventually periodic sequence under
\(T_z^{(2)}\) such that every iterate is defined, the two entries remain
in different \(Q\)-classes, and at least one iterate differs from its
predecessor. Finiteness of \(V^2\) bounds the verification by
\(|V|^2+1\) iterates.

\subsection{Use}\label{a.4.-use}

\textbf{W5 (use).} For a nonempty word \(u\) with
\(T_u\ne\operatorname{Id}_V\), state \(v\), and quotient \(Q\), define
the quotient response profile

\[
 \beta_u(v)=\{\kappa_Q(w):w\in T_u(v)\}.
\]

A use word for \((p,q)\) satisfies

\[
 \beta_u(p)\ne\beta_u(q),\qquad
 \bigl(\beta_u(p),\beta_u(q)\bigr)
 \ne\bigl(\{\kappa_Q(p)\},\{\kappa_Q(q)\}\bigr).
\]

A write-preserve-use certificate is a fork, an admissible offload, a
persistent pair orbit and a use word for the offloaded pair.

\subsection{Record systems}\label{a.5.-record-systems}

\textbf{W6 (support and class pair).} Every write-preserve-use
certificate has the unordered class pair

\[
 K=\{\kappa_Q(p),\kappa_Q(q)\}
\]

and an operational support equal to the union of states occurring in its
persistent pair orbit. A record system is any nonempty finite family of
certificates with one class pair \(K\). Its support is the union of
their operational supports. Its representatives in class \(k\in K\) are
all support states belonging to \(k\); both representative sets must be
nonempty.

\subsection{Translation}\label{a.6.-translation}

Let \(X,Y\) be distinct record systems and let their class pairs be
presented as ordered pairs \((x_0,x_1)\) and \((y_0,y_1)\). A nonempty
relation word \(t\) induces a translation \(X\to Y\) when, for every
\(i\in\{0,1\}\), \(T_t\) is single-valued on every representative of
\(x_i\), all images lie in the support of \(Y\), and all have one common
quotient class \(y_{\pi(i)}\), where \(\pi\) is a permutation of
\(\{0,1\}\). The gain is the parity of \(\pi\).

\subsection{The five existence gates}\label{a.7.-the-five-existence-gates}

For a finite chosen family of record systems:

\begin{enumerate}
\def\labelenumi{\arabic{enumi}.}
\tightlist
\item
  \textbf{E1, multiple systems:} at least two valid record systems are
  present.
\item
  \textbf{E2, reciprocal translation:} some unordered pair \(X,Y\) has a
  translation in both directions.
\item
  \textbf{E3, operational localization:} some two supports are nested
  strictly or have nonempty proper overlap; disjoint and equal supports
  do not qualify.
\item
  \textbf{E4, recorded change:} there exist distinct record systems
  \(X,Y\), a certificate in \(X\) whose W4 orbit has eventual period
  \(m>1\), a coordinate \(j\in\{0,1\}\), and a nonempty word \(r\).
  Write \((p_h,q_h)\), \(0\le h<m\), for the periodic part and set
  \(x_h=p_h\) if \(j=0\) and \(x_h=q_h\) if \(j=1\). For every \(h\),
  \(T_r(x_h)\) must be a singleton \(\{y_h\}\) with \(y_h\) in the
  support of \(Y\) and \(\kappa_Q(y_h)\) in its class pair; the sequence
  \((\kappa_Q(y_h))_{h=0}^{m-1}\) must be nonconstant.
\item
  \textbf{E5, odd gain:} the directed translation graph has a closed
  walk of odd total gain, equivalently the parity-lift condition of
  Theorem 5.1.
\end{enumerate}

Together these are the five conditions defining \(\mathsf H\).

\subsection{Soundness of bounded discovery}\label{a.8.-soundness-of-bounded-discovery}

The bounded program serializes the words, state pairs, persistent
orbits, supports and translation cycle used by a positive output.
Re-evaluating the relations represented by those words and checking
A.1--A.7 proves the certificate without assuming that the search
enumerated the full semigroup. Conversely, if the full finite semigroup
is enumerated and no write-preserve-use certificate exists,
\(\mathsf H\) is false because its first required object is absent.

\section{Deterministic search order}\label{appendix-b.-deterministic-search-order}

The bounded discovery algorithm performs breadth-first right
multiplication of generator relations. Relations are deduplicated
extensionally. Generator names and words are ordered lexicographically;
the discovery bounds are a maximum number of distinct relations and a
maximum word length. The algorithm is a sound positive search, not a
complete negative procedure when either cap is hit.

The SHA-index family is defined separately and is used only for the
finite evaluation in Section 8. The local transposition witnesses in
Section 7 do not depend on SHA repartitioning.

\section{Data and code map}\label{appendix-c.-data-and-code-map}

The ancillary archive contains:

\begin{itemize}
\tightlist
\item
  complete explicit certificates for the target and comparison
  partitions;
\item
  structural and orbit-verification programs;
\item
  an independent radius-one implementation of \(\mathsf H\);
\item
  the exact radius-one neighbourhood counts;
\item
  a clean-room replay of the target-positive evaluations at their
  declared per-row bounds;
\item
  the row-3388 incidence-matched positive certificate;
\item
  the exhaustive shortest alternating-cycle census for row 3388;
\item
  ten explicit row-3388 fixed-role alternatives;
\item
  the fixed-role rigidity certificates and forcing traces;
\item
  the complete 64-index evaluation output;
\item
  target and index-0 positive certificates for row 226;
\item
  the exact negative certificate for row 226, index 50;
\item
  the standard-library verification programs; and
\item
  SHA-256 checksums for every ancillary payload file.
\end{itemize}

\section*{References}\label{bibliography}
\addcontentsline{toc}{section}{References}

\protect\phantomsection\label{refs}
\begin{CSLReferences}{1}{0}
\bibitem[\citeproctext]{ref-ArtzyRandrupStone2005}
Artzy-Randrup, Yael, and Lewi Stone. 2005. {``Generating Uniformly
Distributed Random Networks.''} \emph{Physical Review E} 72: 056708.
\url{https://doi.org/10.1103/PhysRevE.72.056708}.

\bibitem[\citeproctext]{ref-Chase1973}
Chase, Phillip J. 1973. {``Transposition Graphs.''} \emph{SIAM Journal
on Computing} 2 (2): 128--33. \url{https://doi.org/10.1137/0202011}.

\bibitem[\citeproctext]{ref-Chepoi2012}
Chepoi, Victor. 2012. {``Nice Labeling Problem for Event Structures: A
Counterexample.''} \emph{SIAM Journal on Computing} 41 (4): 715--27.
\url{https://doi.org/10.1137/110837760}.

\bibitem[\citeproctext]{ref-DiaconisSturmfels1998}
Diaconis, Persi, and Bernd Sturmfels. 1998. {``Algebraic Algorithms for
Sampling from Conditional Distributions.''} \emph{The Annals of
Statistics} 26 (1): 363--97.
\url{https://doi.org/10.1214/aos/1030563990}.

\bibitem[\citeproctext]{ref-DorschEtAl2017}
Dorsch, Ulrich, Stefan Milius, Lutz Schröder, and Thorsten Wißmann.
2017. {``Efficient Coalgebraic Partition Refinement.''} In \emph{28th
International Conference on Concurrency Theory (CONCUR 2017)},
85:32:1--16. Leibniz International Proceedings in Informatics (LIPIcs).
Schloss Dagstuhl--Leibniz-Zentrum f{ü}r Informatik.
\url{https://doi.org/10.4230/LIPIcs.CONCUR.2017.32}.

\bibitem[\citeproctext]{ref-FilmusODonnellWu2022}
Filmus, Yuval, Ryan O'Donnell, and Xinyu Wu. 2022. {``Log-{S}obolev
Inequality for the Multislice, with Applications.''} \emph{Electronic
Journal of Probability} 27 (33): 1--30.
\url{https://doi.org/10.1214/22-EJP749}.

\bibitem[\citeproctext]{ref-FosdickEtAl2018}
Fosdick, Bailey K., Daniel B. Larremore, Joel Nishimura, and Johan
Ugander. 2018. {``Configuring Random Graph Models with Fixed Degree
Sequences.''} \emph{SIAM Review} 60 (2): 315--55.
\url{https://doi.org/10.1137/16M1087175}.

\bibitem[\citeproctext]{ref-HellmuthEtAl2015}
Hellmuth, Marc, Tilen Marc, Lydia Ostermeier, and Peter F. Stadler.
2015. {``The Relaxed Square Property.''} \emph{Australasian Journal of
Combinatorics} 62: 240--70. \url{https://arxiv.org/abs/1407.3164}.

\bibitem[\citeproctext]{ref-HongMiklos2023}
Hong, Letong, and István Miklós. 2023. {``A Markov Chain on the Solution
Space of Edge Colorings of Bipartite Graphs.''} \emph{Discrete Applied
Mathematics} 332: 7--22.
\url{https://doi.org/10.1016/j.dam.2023.01.029}.

\bibitem[\citeproctext]{ref-LehnerSmith2020}
Lehner, Florian, and Simon M. Smith. 2020. {``On Symmetries of Edge and
Vertex Colourings of Graphs.''} \emph{Discrete Mathematics} 343 (9):
111959. \url{https://doi.org/10.1016/j.disc.2020.111959}.

\bibitem[\citeproctext]{ref-MaslovSneppen2002}
Maslov, Sergei, and Kim Sneppen. 2002. {``Specificity and Stability in
Topology of Protein Networks.''} \emph{Science} 296 (5569): 910--13.
\url{https://doi.org/10.1126/science.1065103}.

\bibitem[\citeproctext]{ref-McConnellEtAl2011}
McConnell, Ross M., Kurt Mehlhorn, Stefan Näher, and Pascal Schweitzer.
2011. {``Certifying Algorithms.''} \emph{Computer Science Review} 5 (2):
119--61. \url{https://doi.org/10.1016/j.cosrev.2010.09.009}.

\bibitem[\citeproctext]{ref-McKay1998}
McKay, Brendan D. 1998. {``Isomorph-Free Exhaustive Generation.''}
\emph{Journal of Algorithms} 26 (2): 306--24.
\url{https://doi.org/10.1006/jagm.1997.0898}.

\bibitem[\citeproctext]{ref-McKayPiperno2014}
McKay, Brendan D., and Adolfo Piperno. 2014. {``Practical Graph
Isomorphism, {II}.''} \emph{Journal of Symbolic Computation} 60:
94--112. \url{https://doi.org/10.1016/j.jsc.2013.09.003}.

\bibitem[\citeproctext]{ref-PaigeTarjan1987}
Paige, Robert, and Robert E. Tarjan. 1987. {``Three Partition Refinement
Algorithms.''} \emph{SIAM Journal on Computing} 16 (6): 973--89.
\url{https://doi.org/10.1137/0216062}.

\bibitem[\citeproctext]{ref-WinskelNielsen1995}
Winskel, Glynn, and Mogens Nielsen. 1995. {``Models for Concurrency.''}
In \emph{Handbook of Logic in Computer Science, Volume 4: Semantic
Modelling}, edited by Samson Abramsky, Dov M. Gabbay, and T. S. E.
Maibaum, 1--148. Oxford University Press.
\url{https://doi.org/10.1093/oso/9780198537809.003.0001}.

\bibitem[\citeproctext]{ref-Zaslavsky1982}
Zaslavsky, Thomas. 1982. {``Signed Graphs.''} \emph{Discrete Applied
Mathematics} 4 (1): 47--74.
\url{https://doi.org/10.1016/0166-218X(82)90033-6}.

\bibitem[\citeproctext]{ref-Zaslavsky1989}
---------. 1989. {``Biased Graphs. {I}. Bias, Balance, and Gains.''}
\emph{Journal of Combinatorial Theory, Series B} 47 (1): 32--52.
\url{https://doi.org/10.1016/0095-8956(89)90063-4}.

\end{CSLReferences}

\end{document}